\newlength{\defbaselineskip}
\newcommand{\setlinespacing}[1]%
           {\setlength{\baselineskip}{#1 \defbaselineskip}}
\newcommand{\la}{\lambda}
\newcommand{\abs}[1]{\left\vert#1\right\vert}
\def\NI{\noindent}
\def\al{\alpha}
\def\be{\beta}
\def\i{\infty}
\theoremstyle{plain}
\newtheorem{theorem}{Theorem}[section]
\newtheorem{definition}[theorem]{Definition}
\newtheorem{lemma}[theorem]{Lemma}
\newtheorem{conjecture}[theorem]{Conjecture}
\title[Cranks in Ramanujan's Lost Notebook]{A study of the crank function in Ramanujan's Lost Notebook}
\author[M. P. Saikia]{Manjil P. Saikia}
\address{Diploma Student, Mathematics Group, The Abdus Salam International Centre for Theoretical Physics, Strada Costiera-11, I-34151, Miramare, Trieste, Italy}
\email{manjil@gonitsora.com, msaikia@ictp.it}
\begin{document}

\maketitle

\begin{abstract}
In this note, we shall give a brief survey of the results that are found in Ramanujan's Lost Notebook related to cranks. Recent work by B. C. Berndt, H. H. Chan, S. H. Chan and W. -C. Liaw have shown conclusively that cranks was the last mathematical object that Ramanujan studied. We shall closely follow the work of Berndt, Chan, Chan and Liaw and give a brief description of their work.
\end{abstract}

\vskip 3mm

\noindent{\footnotesize Key Words: Ramanujan's Lost Notebook, partitions, crank of a partition, rank of a partition, dissections.}

\vskip 3mm

\noindent{\footnotesize 2010 Mathematical Reviews Classification
Numbers: 11-02, 11P81, 11P82, 11P83, 11P84.}

\section{{Introduction}}

This note is divided into three sections. This section is a brief introduction to the results that we discuss in the remainder of the note and to set the notations and preliminaries that we shall be needing in the rest of our study. In the second section, we formally study the crank statistic for the general partition function which is defined as follows.

\begin{definition}[Partition Function]
 If $n$ is a positive integer, let $p(n)$ denote the number of unrestricted representations of $n$ as a sum of positive integers, where representations
 with different orders of the same summands are not regarded as distinct. We call $p(n)$ the partition function. 
\end{definition}

We state a few results of George E. Andrews and Frank G. Garvan \cite{crank} and then show conclusively that Ramanujan had also been studying the crank function much earlier in another guise. We follow the works of Bruce C. Berndt and his collaboraters (\cite{bcb1, bcb2} and \cite{bcb3}) to study the work of Ramnaujan dealing with cranks. In the final section, we give some concluding remarks.

Although no originality is claimed with regards to the results proved here, but in some cases the arguments may have been modified to give an easier justification.

\subsection{Motivation}

The motivation for the note comes from the work of Freeman Dyson \cite{dyson1}, in which he gave combinatorial explanations of the following famous congruences given by Ramanujan

\begin{equation}\label{p1}
 p(5n+4)\equiv 0~(\textup{mod}~5),
\end{equation}

\begin{equation}\label{p2}
 p(7n+5)\equiv 0~(\textup{mod}~7),
\end{equation}

\begin{equation}\label{p3}
 p(11n+6)\equiv 0~(\textup{mod}~11),
\end{equation} where $p(n)$ is the ordinary partition function defined as above.

In order to give combinatorial explanations of the above, Dyson defined the \textit{rank} 
of a partition to be the largest part minus the number of parts. Let $N(m,t,n)$ denote the number of partitions of $n$ with rank congruent to $m$ modulo $t$. Then Dyson conjectured that
$$N(k,5,5n+4)=\frac{p(5n+4)}{5}, ~~0\leq k \leq 4,$$ and $$N(k,7,7n+5)=\frac{p(7n+5)}{7}, ~~0\leq k \leq 6,$$
which yield combinatorial interpretations of \eqref{p1} and \eqref{p2}.

These conjectures were later proven by Atkin and Swinnerton-Dyer in \cite{atkin}. The generating function for $N(m,n)$ is given by

$$\sum_{m=-\infty}^{\infty}\sum_{n=0}^{\infty}N(m,n)a^mq^n=\sum_{n=0}^{\infty}\frac{q^{n^2}}{(aq;q)_n(q/a;q)_n}.$$ Here $\abs q<1, ~\abs q < \abs a <1/\abs q .$ For each nonnegative integer $n$, we set
 
 $$(a)_n:= (a;q)_n:= \prod_{k=0}^{n-1}(1-aq^k), ~~(a)_{\infty}:=(a;q)_{\infty}:=\lim_{n\rightarrow \infty}(a;q)_n, \abs q<1.$$

\NI We also set $$(a_1, \ldots, a_m;q)_n:=(a_1;q)_n\cdots(a_m;q)_n$$ and $$(a_1, \ldots, a_m;q)_{\infty}:=(a_1;q)_{\infty}\cdots(a_m;q)_{\infty}.$$

However, the corresponding analogue of the rank doesn't hold for \eqref{p3}, and so Dyson conjectured the existence of another statistic which he called the \textit{crank}. In his doctoral dissertation F. G. Garvan defined vector partitions which became the forerunner of the true crank. 

Let $\mathcal{P}$ denote the set of partitions and $\mathcal{D}$ denote the set of partitions into distinct parts. Following Garvan, \cite{vcg} we 
denote the set of vector partitions $V$ to be defined by $$V=\mathcal{D}\times \mathcal{P} \times \mathcal{P}.$$ For $\vec{\pi}=(\pi_1, \pi_2, \pi_3)\in V$, we define the weight $\omega(\vec{\pi})=(-1)^{\#(\pi_1)}$, the crank$(\vec{\pi})=\#(\pi_2)-\#(\pi_3)$, and 
$\abs{\vec{\pi}} = \abs{\pi_1} +\abs{\pi_2}+\abs{\pi_3}$, where $\abs{\pi}$ is the sum of the parts of $\pi$. The number of vector partitions of $n$ with crank $m$ counted according to the weight $\omega$ is denoted by $$N_V(m,n)=\sum_{\vec{\pi}\in V, \abs{\vec{\pi}}=n, crank(\vec{\pi})=m}\omega(\vec{\pi}).$$

\NI We then have $$\sum_m N_V(m,n)=p(n)$$ where the summation is taken over all possible cranks $m$.
 
Let $N_V(m,t,n)$ denote the number of vector partitions of $n$ with crank congruent to $m$ modulo $t$ counted according to the weights $\omega$. Then we have
 
 \begin{theorem}[Garvan, \cite{vcg}]
  
  $$N_V(k,5,5n+4)=\frac{p(5n+4)}{5},~0\leq k \leq 4,$$
  $$N_V(k,7,7n+5)=\frac{p(7n+5)}{7},~0\leq k \leq 6,$$
  $$N_V(k,11,11n+6)=\frac{p(11n+6)}{11},~0\leq k \leq 10.$$
  
 \end{theorem}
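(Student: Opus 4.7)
The plan is to pass to a bivariate generating function, apply a roots-of-unity filter, and reduce the theorem to a vanishing statement about a dissection. To begin, I would establish the identity
\[
F(z,q) := \sum_{n \ge 0}\sum_{m} N_V(m,n)\, z^m q^n = \frac{(q;q)_\infty}{(zq;q)_\infty\,(z^{-1}q;q)_\infty}
\]
by factoring over the three components of $V = \mathcal{D} \times \mathcal{P} \times \mathcal{P}$: Euler's identity $(q;q)_\infty = \sum_{\pi_1 \in \mathcal{D}} (-1)^{\#(\pi_1)} q^{|\pi_1|}$ handles $\pi_1$, while tracking the exponent $z^{\#(\pi_2) - \#(\pi_3)}$ on $\mathcal{P} \times \mathcal{P}$ produces the two reciprocal $q$-Pochhammer factors. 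Setting $z = 1$ recovers $F(1,q) = 1/(q;q)_\infty$, consistent with the relation $\sum_m N_V(m,n) = p(n)$ noted above.

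Next, for a prime $t \in \{5, 7, 11\}$ and $\zeta = e^{2\pi i/t}$, the standard orthogonality relation for $t$-th roots of unity gives
\[
\sum_{n \ge 0} N_V(k,t,n)\, q^n = \frac{1}{t} \sum_{j=0}^{t-1} \zeta^{-jk} F(\zeta^j, q).
\]
The $j = 0$ term contributes $\frac{1}{t(q;q)_\infty}$, i.e.\ $p(n)/t$ in every residue class. Since the nontrivial factors $F(\zeta^j, q)$ for $j = 1, \ldots, t-1$ are the Galois conjugates of $F(\zeta, q)$ over $\mathbb{Q}$, the entire theorem reduces to the single vanishing statement
\[
[q^{tn + \delta_t}]\, F(\zeta, q) = 0, \qquad \delta_5 = 4,\ \delta_7 = 5,\ \delta_{11} = 6,
\]
for every $n \ge 0$ and each of the three primes $t$.

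To produce the dissections, I would use $\prod_{j=0}^{t-1}(1 - \zeta^j x) = 1 - x^t$ to obtain
\[
\prod_{j=1}^{t-1} (\zeta^j q;q)_\infty = \frac{(q^t;q^t)_\infty}{(q;q)_\infty},
\]
and combine this with the Jacobi triple product to rewrite $F(\zeta, q)$ as a quotient of Ramanujan theta functions whose $t$-dissection can be analyzed term by term. For $t = 5$ and $t = 7$ the required vanishing is essentially the classical $5$- and $7$-dissection of the eta quotient that underlies Ramanujan's own proofs of \eqref{p1} and \eqref{p2}. For $t = 11$, the needed $11$-dissection is substantially deeper and is the main content of Garvan's original crank paper \cite{vcg}.

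The first two steps are essentially bookkeeping. The genuine difficulty is concentrated in the last step, and specifically in the $t = 11$ case: one must display an explicit $11$-dissection of $F(\zeta_{11}, q)$ in which terms of $q$-degree $\equiv 6 \pmod{11}$ are manifestly absent, and this requires a delicate combination of Ramanujan's theta-function identities that is not foreshadowed by the much more elementary $t = 5, 7$ analyses.
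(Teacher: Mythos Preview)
The paper does not supply a proof of this theorem at all: it is stated as a result of Garvan with a citation to \cite{vcg}, and the exposition immediately moves on. So there is no ``paper's own proof'' to compare against.

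That said, your outline is the correct and standard strategy, essentially Garvan's. The generating function identity, the roots-of-unity filter, and the reduction to the vanishing of a specific residue class in the $t$-dissection of $F(\zeta_t,q)$ are all sound. You are also right that the substance lives entirely in the last step. It is worth noting that the explicit $5$-, $7$-, and $11$-dissections you would need are in fact recorded later in this very paper (Theorems~\ref{5da}, \ref{7da}, and \ref{11da}), as congruences for $F_a(q)$ modulo $S_{(t-1)/2}(a)$, which is exactly specialization at a primitive $t$-th root of unity; each of those dissections visibly lacks a term in the relevant residue class ($q^4$, $q^5$, $q^6$ respectively). So while the paper does not assemble these pieces into a proof of Garvan's theorem, the ingredients your sketch calls for are present in Section~2. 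Your honest acknowledgment that the $t=11$ dissection is the genuinely hard input, deferred to \cite{vcg}, is appropriate: the paper likewise states Theorem~\ref{11da} without proof, calling it ``very tedious.''
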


 On June 6, 1987 at a student dormitory in the University of Illinois, Urbana-Champaign, G. E. Andrews and F. G. Garvan \cite{crank} found the true crank. 

\begin{definition}[Crank]
 For a partition $\pi$, let $\lambda(\pi)$ denote the largest part of $\pi$, let $\mu(\pi)$ denote the number of ones in $\pi$, and let 
 $\nu(\pi)$ denote the number of parts of $\pi$ larger than $\mu(\pi)$. The crank $c(\pi)$ is then defined to be 
 
 $$
c(\pi)=\left\{\begin{array}{cc}
{\lambda(\pi)} & if~\mu(\pi)=0,\\
{\nu(\pi)-\mu(\pi)} & if~\mu(\pi)>0.
\end{array}
\right.
$$
\end{definition}

Let $M(m,n)$ denote the number of partitions of $n$ with crank $m$, and let $M(m,t,n)$ denote the number of partitions of $n$ with crank congruent to 
 $m$ modulo $t$. For $n\leq 1$ we set $M(0,0)=1, M(m,0)=0$, otherwise $M(0,1)=-1, M(1,1)=M(-1,1)=1$ and $M(m,1)=0$ otherwise. The generating function for $M(m,n)$ is given by 
 
 \begin{equation}\label{gfc}
\sum_{m=-\infty}^{\infty}\sum_{n=0}^{\infty}M(m,n)a^mq^n=\frac{(q;q)_{\infty}}{(aq;q)_{\infty}(q/a;q)_{\infty}}.  
 \end{equation}

 The crank not only leads to combinatorial interpretations of \eqref{p1} and \eqref{p2}, but also of \eqref{p3}. In fact we have the following result.

 \begin{theorem}[Andrews-Gravan \cite{crank}]
  With $M(m,t,n)$ defined as above, 
  $$M(k,5,5n+4)=\frac{p(5n+4)}{5},~0\leq k \leq 4,$$
  $$M(k,7,7n+5)=\frac{p(7n+5)}{7},~0\leq k \leq 6,$$
  $$M(k,11,11n+6)=\frac{p(11n+6)}{11},~0\leq k \leq 10.$$
 \end{theorem}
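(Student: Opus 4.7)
The cleanest route is a roots-of-unity filter applied to the generating function \eqref{gfc}. Fix $t \in \{5,7,11\}$, let $\zeta = e^{2\pi i / t}$, and write $F(a,q) = (q;q)_\infty / \bigl((aq;q)_\infty (q/a;q)_\infty\bigr)$. Orthogonality of characters modulo $t$ gives
\begin{equation*}
\sum_{m \equiv k \pmod{t}} M(m,n) \;=\; \frac{1}{t}\sum_{j=0}^{t-1}\zeta^{-jk}\,[q^n]\,F(\zeta^j, q).
\end{equation*}
Since $F(1,q) = 1/(q;q)_\infty = \sum_{n\ge 0} p(n) q^n$, the $j=0$ summand contributes exactly $p(n)/t$. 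It therefore suffices to show that $[q^n] F(\zeta^j, q) = 0$ for every $1 \le j \le t-1$ whenever $n$ lies in the progression $5n+4$, $7n+5$, or $11n+6$ corresponding to $t = 5, 7, 11$ respectively.

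To effect the dissection I would exploit the cyclotomic factorization $\prod_{i=0}^{t-1}(1-\zeta^i x) = 1 - x^t$, which lets one rewrite $F(\zeta^j, q)$ as an eta-quotient whose denominator is $(q^t;q^t)_\infty$, a series in $q^t$ alone. Applying Jacobi's triple product identity to the remaining infinite products in the numerator converts $F(\zeta^j,q)$ into a theta series whose exponents are explicit quadratic forms, and a residue-class inspection modulo $t$ then rules out the forbidden residues $4, 5, 6$.

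The cases $t=5$ and $t=7$ reduce to a single theta function and follow from an elementary quadratic-residue calculation, close in spirit to Ramanujan's original treatment of the congruences for $p(n)$. The case $t=11$ is substantively harder, since no single triple-product identity effects the $11$-dissection; one must invoke Winquist's two-variable theta identity, and the verification that the $q^{11n+6}$ coefficient vanishes is the principal technical obstacle I would expect to encounter.

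A shortcut sufficient for the expository purposes of this survey is to observe that the right-hand side of \eqref{gfc} coincides with Garvan's generating function for weighted vector partitions; consequently $M(m,n) = N_V(m,n)$ for all $m$ and all $n \ge 2$, and the three congruences follow at once from Garvan's theorem stated above. From this vantage point the genuine content of the Andrews--Garvan discovery is the combinatorial identification of an honest partition statistic $c(\pi)$ whose distribution reproduces \eqref{gfc}; the arithmetic conclusion of the present theorem is then an immediate corollary of the earlier vector-partition result.
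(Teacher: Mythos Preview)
The paper does not supply its own proof of this theorem: it is quoted as a result of Andrews and Garvan with a citation to \cite{crank}, and the surrounding text is purely expository. So there is no in-paper argument to compare against.

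That said, your proposal is sound. The roots-of-unity filter is the standard device, and your reduction to showing that the coefficient of $q^{5n+4}$ (resp.\ $q^{7n+5}$, $q^{11n+6}$) in $F(\zeta^j,q)$ vanishes for each nontrivial $t$-th root of unity $\zeta^j$ is exactly right. One small correction: for $t=5$ and $t=7$ the specialization $F(\zeta,q)$ does not collapse to a \emph{single} theta function; rather its $t$-dissection has several theta components but simply lacks the forbidden residue class (this is precisely the content of Theorems~\ref{5da} and~\ref{7da} later in the paper, where no $q^{5n+4}$ or $q^{7n+5}$ term appears). Your invocation of Winquist's identity for $t=11$ is appropriate and again matches Theorem~\ref{11da}.

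Your closing shortcut is in fact the original Andrews--Garvan argument in \cite{crank}: the generating function \eqref{gfc} for $M(m,n)$ is literally the same infinite product as Garvan's generating function for $N_V(m,n)$, so $M(m,n)=N_V(m,n)$ identically (not merely for $n\ge 2$; the artificial values of $M(m,1)$ in the paper are chosen precisely to make \eqref{gfc} hold at $n=1$). The three equidistribution statements then follow immediately from Garvan's theorem. For a survey of this kind, that one-line deduction is the proof one would record.
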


Thus, we see that an observation by Dyson lead to concrete mathematical objects almost 40 years after it was first conjectured. Following the work of Andrews and Garvan, there have been a plethora of results by various authors including Garvan himself, where cranks have been found for many different congruence relations. We see that, the crank turns out to be an interesting object to study.

\subsection{Notations and Preliminaries}

We record here some of the notations and important results that we shall be using throughout our study. 
 Ramanujan's general theta function $f(a,b)$ is defined by  
 $$f(a,b):=\sum_{n=-\infty}^{\infty}a^{n(n+1)/2}b^{n(n-1)/2}, ~\abs{ab}<1.$$

 Our aim in Section 2 is to study the following more general function
 
 $$F_a(q)=\frac{(q;q)_\infty}{(aq;q)_\infty (q/a;q)_\infty}.$$
 
To understand the theorems mentioned above and also the results of Ramnaujan stated in his Lost Notebook and the other results to be discussed here, we need some well-known as well as lesser known results which we shall discuss briefly here. A more detailed discussion can be found in \cite{geasf, qbcb} and \cite{qseries}.

One of the most common tools in dealing with $q$-series identities is the Jacobi Triple Product Identity, given by the following.

\begin{theorem}[Jacobi's Triple Product Identity]
  For $z\neq 0$ and $\abs x<1$, we have $$\prod_{n=0}^{\infty}\{(1+x^{2n+2})(1+x^{2n+1}z)(1+x^{2n+1}z^{-1})\}=\sum_{n=-\infty}^{\infty}x^{n^2}z^n.$$
\end{theorem}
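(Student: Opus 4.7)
The plan is to follow Jacobi's classical argument: regard the left-hand side as a function of $z$ for fixed $|x|<1$, exploit a functional equation in $z$ to determine its Laurent coefficients up to a single multiplicative factor $c_0(x)$, and then identify $c_0(x)$ by comparison with an independently known evaluation.

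First I would set
$$P(z) := \prod_{n=0}^{\infty}(1+x^{2n+1}z)(1+x^{2n+1}z^{-1}),$$
which for $|x|<1$ converges for every $z\in\mathbb{C}\setminus\{0\}$ and admits a Laurent expansion $P(z)=\sum_{m\in\mathbb{Z}}c_{m}(x)\,z^{m}$ on the annulus $|x|<|z|<|x|^{-1}$. Reindexing the two factor products (replacing $n$ by $n+1$ in the first product, and enlarging the second to include an $n=-1$ term) yields the functional equation
$$P(x^{2}z) \;=\; x^{-1}z^{-1}\,P(z).$$
Substituting the Laurent series and equating coefficients of $z^{m}$ gives the recurrence $c_{m+1}(x)=x^{2m+1}c_{m}(x)$, which iterates to $c_{m}(x)=x^{m^{2}}c_{0}(x)$. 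Hence
$$P(z) \;=\; c_{0}(x)\sum_{m=-\infty}^{\infty}x^{m^{2}}z^{m}.$$

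After multiplying through by $\prod_{n=0}^{\infty}(1+x^{2n+2})$, the claimed identity reduces to showing $c_{0}(x)\prod_{n=0}^{\infty}(1+x^{2n+2})=1$. I expect this to be the main obstacle, since the functional equation alone leaves $c_{0}(x)$ undetermined. A clean finish is to specialize $z$ to a convenient value inside the annulus $|x|<|z|<|x|^{-1}$, for example $z=-1$: the left side becomes $P(-1)=\prod_{n=0}^{\infty}(1-x^{2n+1})^{2}$, while the right side becomes $c_{0}(x)\sum_{m}(-1)^{m}x^{m^{2}}$. This expresses $c_{0}(x)$ as the ratio $\prod(1-x^{2n+1})^{2}/\sum(-1)^{m}x^{m^{2}}$, and an independent evaluation of the theta series (via Gauss's identity, or equivalently via Cauchy's $q$-binomial theorem applied to a suitable specialization) identifies this quotient as $1/\prod(1+x^{2n+2})$, as required.

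Once $c_{0}(x)$ is so determined, combining it with the functional-equation step completes the proof. The whole argument thus factors into two independent pieces: a straightforward algebraic manipulation (the functional equation, which uses nothing beyond reindexing infinite products), and a more substantive $q$-series input (the evaluation of $c_{0}(x)$), the latter being where all of the arithmetic content of the identity ultimately resides.
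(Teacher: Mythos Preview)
The paper does not actually prove this result; it simply refers the reader to Andrews' short 1965 proof. So there is no in-paper argument to match against, but it is worth noting that your route and Andrews' are genuinely different. Andrews expands each of the two $z$-dependent half-products separately via Euler's identity $\prod_{n\ge0}(1+aq^n)=\sum_{k\ge0}a^kq^{k(k-1)/2}/(q;q)_k$ (a limiting case of the $q$-binomial theorem), multiplies the two resulting series, and resums; no undetermined constant ever enters. Your route --- Jacobi's original functional-equation argument --- cleanly isolates the analytic structure (the recursion $c_{m+1}=x^{2m+1}c_m$) from the arithmetic input, but at the price of leaving the single constant $c_0(x)$ to be determined at the end, and that is exactly where your sketch is weakest.

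Specialising to $z=-1$ and then appealing to ``Gauss's identity'' to evaluate $\sum_m(-1)^m x^{m^2}$ is, as written, circular: Gauss's identity \emph{is} the $z=-1$ case of the triple product. The circle can be broken --- Gauss's identity admits independent proofs via Euler's partition recursions, and your parenthetical nod to the $q$-binomial theorem hints you know this --- but you should make that independence explicit, or else replace this step by the standard self-contained device: substitute a further special value (for example $z=i$) to obtain a relation of the shape $c_0(x)=c_0(x^4)$, iterate, and use continuity at $x=0$. One further point: the identity as printed in the paper carries a sign typo --- the Euler factor must be $(1-x^{2n+2})$, not $(1+x^{2n+2})$ --- so your stated target $c_0(x)\prod_{n\ge0}(1+x^{2n+2})=1$ is literally false; your argument goes through unchanged once the sign is corrected.
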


For a very well known proof of this result by Andrews, the reader is referred to \cite{gj}.

It is now easy to verify that Ramanujan's theta function $f(a,b)$ satisfies Jacobi triple product identity

\begin{equation}\label{jtp}
f(a,b)=(-a;ab)_{\infty}(-b;ab)_{\infty}(ab;ab)_{\infty}  
\end{equation}

\noindent and also the elementary identity

\begin{equation}\label{elt1}
f(a,b)=a^{n(n+1)/2}b^{n(n-1)/2}f(a(ab)^n,b(ab)^{-n}),
\end{equation}

\noindent for any integer $n$.

We now state a few results that are used to prove some of the results discussed in the next section.

\begin{theorem}[Ramanujan]\label{ram1}
  If $$A_n:=a^n+a^{-n},$$ then $$\frac{(q;q)_{\i}}{(aq;q)_{\i}(q/a;q)_{\i}}=\frac{1-\sum_{m=1,n=0}^{\i}(-1)^mq^{m(m+1)/2+mn}(A_{n+1}-A_n)}{(q;q)_\i}.$$
\end{theorem}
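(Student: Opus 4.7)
Multiplying through by $(q;q)_\infty$, the statement is equivalent to
$$R(a,q) := \frac{(q;q)_\infty^2}{(aq;q)_\infty(q/a;q)_\infty} = 1 - S, \qquad S := \sum_{m\geq 1,\,n\geq 0}(-1)^m q^{m(m+1)/2+mn}(A_{n+1}-A_n).$$
The plan is to derive both sides from Jacobi's triple product and then bridge them via Abel summation.

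First I would apply \eqref{jtp} in the form $f(-a,-q/a) = (a;q)_\infty(q/a;q)_\infty(q;q)_\infty = \sum_{n\in\mathbb{Z}}(-1)^n a^n q^{n(n-1)/2}$, and use $(a;q)_\infty = (1-a)(aq;q)_\infty$ to extract the factor $(1-a)$ on the left. Next I would reorganize the bilateral sum on the right: isolate $n=0$, send $n\mapsto -n$ in the negative tail, and shift $n\to m+1$ in the positive tail to align the $q$-exponents. The two tails then combine into $\sum_{m\geq 1}(-1)^m q^{m(m+1)/2}(a^{-m}-a^{m+1})$, plus the constant $(1-a)$. The geometric identity $a^{-m}-a^{m+1}=(1-a)(1+\sum_{j=1}^{m}A_j)$ allows cancellation of $(1-a)$, producing the key formula
$$(q;q)_\infty(aq;q)_\infty(q/a;q)_\infty = 1+\sum_{m\geq 1}(-1)^m q^{m(m+1)/2}\Big(1+\sum_{j=1}^{m}A_j\Big).$$

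Setting $a=1$ in this identity recovers Jacobi's $(q;q)_\infty^3 = 1+\sum_{m\geq 1}(-1)^m(2m+1)q^{m(m+1)/2}$. Denoting the common left-hand side by $D$ so that $R = (q;q)_\infty^3/D$, the target $R = 1-S$ is equivalent to $SD = D-(q;q)_\infty^3$. Subtracting the two series expressions above yields the remaining identity
$$S\cdot(q;q)_\infty(aq;q)_\infty(q/a;q)_\infty = \sum_{m\geq 1}(-1)^m q^{m(m+1)/2}\sum_{j=1}^{m}(A_j-2).$$

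The hard part is verifying this product identity. On the right, telescoping $A_j-2=A_j-A_0=\sum_{l=1}^{j}(A_l-A_{l-1})$ and interchanging summation gives $\sum_{l\geq 1}(A_l-A_{l-1})\sum_{m\geq l}(-1)^m(m-l+1)q^{m(m+1)/2}$. On the left, the inner geometric sum inside $S$ evaluates to $\sum_{n\geq 0}q^{mn}(A_{n+1}-A_n)=(a-1)^2(1+q^m)/[(1-aq^m)(a-q^m)]$, and the product with $D$ must then be expanded so that the cross-terms involving $A_{n+1}A_j$---which simplify via $A_pA_q=A_{p+q}+A_{|p-q|}$---collapse to the linear-in-$(A_l-A_{l-1})$ form. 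I expect to handle this step either by a partial-fraction analysis matching the factors $(1-aq^m)(a-q^m)$ against those of $D$, or by coefficient-by-coefficient comparison in $a^k q^N$, where the $a\leftrightarrow 1/a$ symmetry of both sides halves the amount of checking.
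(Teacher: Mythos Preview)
Your reduction to the ``hard part'' $S\cdot D=\sum_{m\ge1}(-1)^mq^{m(m+1)/2}\sum_{j=1}^m(A_j-2)$ is circular: since the claim $R=1-S$ is \emph{equivalent} to $SD=D-(q;q)_\infty^3$, you have restated the theorem rather than reduced it. The two completion strategies you sketch are not viable as written. If you keep $S$ as a double sum and multiply by the series for $D$, the $A_{n+1}A_j$ cross-terms do not collapse to anything linear in $(A_l-A_{l-1})$ without an additional identity of the same strength as the one you are proving; if instead you use your closed form for the inner $n$-sum, there are no $A_{n+1}$'s left, and you are multiplying a sum of rational functions by an infinite product---this is a partial-fraction problem, not a telescoping one. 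Coefficient-by-coefficient comparison in $a^kq^N$ is an assertion, not an argument. The (correct) intermediate formula for $D$ ends up playing no role.

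The missing step is already in your hands, and it is exactly how the paper disposes of Theorem~\ref{ram1}: by declaring it equivalent to Theorem~\ref{kac}. From your own computation,
\[
\sum_{n\ge0}q^{mn}(A_{n+1}-A_n)=\frac{(a-1)^2(1+q^m)}{(1-aq^m)(a-q^m)}
=-(1-a)\Bigl(\frac{1}{1-aq^m}+\frac{1}{q^m-a}\Bigr),
\]
whence
\[
1-S=1+\sum_{m\ge1}(-1)^mq^{m(m+1)/2}(1-a)\Bigl(\frac{1}{1-aq^m}+\frac{1}{q^m-a}\Bigr)
=\sum_{k=-\infty}^{\infty}\frac{(-1)^kq^{k(k+1)/2}(1-a)}{1-aq^k},
\]
since the second fraction in each bracket is the $k=-m$ term of the bilateral sum (use $q^{(-m)(-m+1)/2}/(1-aq^{-m})=q^{m(m+1)/2}/(q^m-a)$). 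The right-hand side is precisely the K\u{a}c--Wakimoto expansion of $R(a,q)=(q;q)_\infty^2/\bigl((aq;q)_\infty(q/a;q)_\infty\bigr)$. So drop the detour through $D$: sum the $n$-series, re-bilateralise, and invoke Theorem~\ref{kac}.
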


It is seen that the above theorem is equivalent to the following theorem.

\begin{theorem}[K\u{a}c and Wakimoto]\label{kac}
Let $a_k=(-1)^kq^{k(k+1)/2}$, then $$\frac{(q;q)^2_\i}{(q/x;q)_\i(qx;q)_\i}=\sum_{k=-\i}^\i\frac{a_k(1-x)}{1-xq^k}.$$
\end{theorem}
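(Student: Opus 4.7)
My plan is to prove $G=H$ via a Liouville-type argument on the quotient $\mathbb{C}^{*}/q^{\mathbb{Z}}$. First I absorb the factor $1-x$ on the right using $(x;q)_\infty=(1-x)(qx;q)_\infty$, reducing the identity to the equivalent form
$$G(x):=\frac{(q;q)^{2}_\infty}{(x;q)_\infty(q/x;q)_\infty}\;=\;\sum_{k=-\infty}^{\infty}\frac{(-1)^{k}q^{k(k+1)/2}}{1-xq^{k}}\;=:H(x).$$

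Next I verify that both sides satisfy the same quasi-periodicity $f(qx)=-xf(x)$. For $G$ this is immediate from $(qx;q)_\infty=(x;q)_\infty/(1-x)$ together with $(1/x;q)_\infty=-(1-x)(q/x;q)_\infty/x$. For $H$ I reindex $k\mapsto k-1$ in $H(qx)$ and apply the telescoping identity
$(-1)^{k-1}q^{k(k-1)/2}=-x(-1)^{k}q^{k(k+1)/2}-(-1)^{k}q^{k(k-1)/2}(1-xq^{k})$,
which collects to $H(qx)+xH(x)=-\sum_{k}(-1)^{k}q^{k(k-1)/2}$. By the Jacobi triple product \eqref{jtp} evaluated at $x=1$, this sum equals $(1;q)_\infty(q;q)^{2}_\infty=0$, as required.

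The crux is then to introduce the Jacobi theta function $\Theta(x):=(x;q)_\infty(q/x;q)_\infty(q;q)_\infty$, which by \eqref{jtp} (take $a=-x$, $b=-q/x$) equals $\sum_{n=-\infty}^{\infty}(-1)^{n}x^{n}q^{n(n-1)/2}$, satisfies $\Theta(qx)=-\Theta(x)/x$, and has simple zeros at every $x=q^{m}$. Direct multiplication yields $G(x)\Theta(x)=(q;q)^{3}_\infty$, so the identity $G=H$ is equivalent to $H(x)\Theta(x)=(q;q)^{3}_\infty$. The two functional equations force $(H\Theta)(qx)=(H\Theta)(x)$, so $H\Theta$ is $q$-periodic. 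Once I verify that the simple zeros of $\Theta$ at $x=q^{m}$ cancel the simple poles of $H$ there, $H\Theta$ is holomorphic on $\mathbb{C}^{*}$ and $q$-periodic, hence descends to the compact torus and is constant by Liouville. To pin down the constant I examine $x\to 1$: the $k=0$ summand of $H$ gives $H(x)\sim 1/(1-x)$, and $\Theta(x)=(1-x)(qx;q)_\infty(q/x;q)_\infty(q;q)_\infty\sim(1-x)(q;q)^{3}_\infty$, so $(H\Theta)(1)=(q;q)^{3}_\infty$, matching $G\Theta$.

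The main obstacle is verifying pole-cancellation in $H\Theta$ at every $x=q^{m}$, not merely at $x=1$. The cleanest way is to bootstrap via $q$-periodicity: once $H\Theta$ is shown to be regular at $x=1$, the relation $(H\Theta)(qx)=(H\Theta)(x)$ automatically propagates regularity to all translates $x=q^{m}$. A more hands-on alternative is a direct residue computation using the $q$-Pochhammer identities $(q;q)_\infty=(q;q)_{k}(q^{k+1};q)_\infty$ and $\prod_{j=1}^{k}(1-q^{-j})=(-1)^{k}q^{-k(k+1)/2}(q;q)_{k}$, showing that the residue of $H$ at $x=q^{-k}$ is exactly $(q;q)^{3}_\infty/\Theta'(q^{-k})$, so that the singularity of $H$ is indeed killed by the zero of $\Theta$.
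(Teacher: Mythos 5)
Your proof is correct and complete, and it is genuinely different from what the paper does: the paper offers no proof of this theorem at all, remarking only that it ``is seen to be equivalent'' to Theorem \ref{ram1} (Ramanujan's Lost Notebook entry), which is itself stated without proof; that equivalence amounts to expanding each $1/(1-xq^k)$ as a geometric series and collecting powers of $x$. Your route --- reducing to $G(x)=(q;q)_\infty^2/((x;q)_\infty(q/x;q)_\infty)=\sum_k a_k/(1-xq^k)=H(x)$, checking the common functional equation $f(qx)=-xf(x)$ (your telescoping step correctly produces the vanishing theta value $\sum_k(-1)^kq^{k(k-1)/2}=0$), and then running a Liouville argument on $H\Theta$ --- is a self-contained analytic proof, which is a real gain over the paper's treatment. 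Two small points. First, the ``main obstacle'' you flag is not actually an obstacle: only the $k$-th summand of $H$ is singular at $x=q^{-k}$, so $H$ has at most a simple pole there, and a simple zero of $\Theta$ automatically cancels a simple pole regardless of residues; no residue computation (and no bootstrapping) is needed. Second, to invoke Liouville you should say explicitly that a holomorphic function on $\mathbb{C}^*$ invariant under $x\mapsto qx$ is bounded on the compact fundamental annulus $|q|\le|x|\le 1$, hence bounded on all of $\mathbb{C}^*$, so the singularities at $0$ and $\infty$ are removable and the function is constant. Your evaluation of the constant at $x=1$, giving $(q;q)_\infty^3$ on both sides, is correct, and dividing by $\Theta$ recovers the stated identity after restoring the factor $(1-x)$.
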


Several times in the sequel we shall also employ an addition theorem found in Chapter 16 of Ramanujan's second notebook \cite[p.~48,~Entry~31]{nb3}.

\begin{lemma}\label{l2.2}
If $U_n=\al^{n(n+1)/2}\be^{n(n-1)/2}$ and $V_n=\al^{n(n-1)/2}\be^{n(n=1)/2}$ for each integer $n$, then $$f(U_1,V_1)=\sum_{k=0}^{N-1}U_kf\displaystyle\left(\frac{U_{N+k}}{U_k}, \frac{V_{N-k}}{U_k}\displaystyle\right).$$
\end{lemma}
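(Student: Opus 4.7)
\textbf{Proof proposal for Lemma \ref{l2.2}.}

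The plan is to expand the right-hand side directly into its defining double series and then reindex so that it becomes $f(U_1,V_1)=f(\al,\be)=\sum_{n\in\mathbb{Z}}\al^{n(n+1)/2}\be^{n(n-1)/2}$. The key observation is that the summation index pair $(k,m)$ with $k\in\{0,1,\dots,N-1\}$ and $m\in\mathbb{Z}$ is in bijection with $n\in\mathbb{Z}$ via the division algorithm $n=mN+k$, and the monomials $U_k\,(U_{N+k}/U_k)^{m(m+1)/2}(V_{N-k}/U_k)^{m(m-1)/2}$ will turn out to be exactly $U_n=\al^{n(n+1)/2}\be^{n(n-1)/2}$.

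First I would record the quotients
\[
\frac{U_{N+k}}{U_k}=\al^{N(N+2k+1)/2}\be^{N(N+2k-1)/2},\qquad \frac{V_{N-k}}{U_k}=\al^{N(N-2k-1)/2}\be^{N(N-2k+1)/2},
\]
which follow from the definitions of $U_n$ and $V_n$ by direct simplification. (Here I am using the natural reading $V_n=\al^{n(n-1)/2}\be^{n(n+1)/2}$ for the statement as printed.) Then I would apply the defining series
\[
f\!\left(\frac{U_{N+k}}{U_k},\,\frac{V_{N-k}}{U_k}\right)=\sum_{m=-\infty}^{\infty}\left(\frac{U_{N+k}}{U_k}\right)^{m(m+1)/2}\left(\frac{V_{N-k}}{U_k}\right)^{m(m-1)/2},
\]
so that the right-hand side of the lemma becomes a double sum over $k$ and $m$.

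Next I would track the exponent of $\al$ and the exponent of $\be$ in the general term $U_k\,(U_{N+k}/U_k)^{m(m+1)/2}(V_{N-k}/U_k)^{m(m-1)/2}$. A short algebraic manipulation, exploiting the identities $m(m+1)+m(m-1)=2m^2$ and $m(m+1)-m(m-1)=2m$, shows that the $\al$-exponent collapses to $\tfrac{1}{2}(N^2m^2+Nm(2k+1)+k(k+1))$, which is precisely $\tfrac{1}{2}(mN+k)(mN+k+1)$; analogously, the $\be$-exponent collapses to $\tfrac{1}{2}(mN+k)(mN+k-1)$. Hence the $(k,m)$-term equals $U_n$ with $n:=mN+k$.

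Finally I would observe that as $k$ runs through $\{0,1,\dots,N-1\}$ and $m$ runs through $\mathbb{Z}$, the integer $n=mN+k$ runs through $\mathbb{Z}$ exactly once, so the reindexed double sum collapses to $\sum_{n\in\mathbb{Z}}U_n=f(U_1,V_1)$, proving the lemma. I expect the only real obstacle to be the bookkeeping of exponents in the step that reduces the $\al$- and $\be$-exponents to $n(n\pm 1)/2$; everything else is a formal manipulation, and there is no convergence issue because $f(a,b)$ is an absolutely convergent series under the standing assumption $|ab|<1$, which in turn forces $|\al\be|<1$ and is preserved by the substitutions above (each inner theta function satisfies its own $|ab|<1$ condition).
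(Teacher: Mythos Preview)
Your argument is correct: the reindexing $n=mN+k$ with $0\le k\le N-1$ and $m\in\mathbb{Z}$ is exactly the standard device, and your exponent computations check out. Note, however, that the paper does not actually supply a proof of this lemma --- it is quoted without proof as Entry~31 of Chapter~16 from Berndt's edition of Ramanujan's second notebook \cite{nb3} --- so there is no in-paper argument to compare against; your proof is essentially the classical one found in that reference.
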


Also useful to us are the following famous results, called the quintuple product identity and Winquist's Identity.

\begin{lemma}[Quintuple Product Identity]\label{qpi}
Let $f(a,b)$ be defined as above, and let $$f(-q):=f(-q,-q^2)=(q;q)_\i,$$ then $$f(P^3Q, Q^4/P^3)-P^2f(Q/P^3, P^3Q^5)=f(-Q^2)\frac{f(-P^2, -Q^2/P^2)}{f(PQ, Q/P)}.$$
\end{lemma}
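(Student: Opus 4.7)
The plan is to convert every theta function $f(a,b)$ appearing in the identity into an explicit infinite product via Jacobi's triple product identity \eqref{jtp}, thereby reducing the quintuple product identity to an equality between a difference of two infinite products on the left and a ratio of infinite products on the right. Throughout, the elementary identity \eqref{elt1} will be used to normalize arguments so that the hypothesis $|ab|<1$ of each $f(a,b)$ is available and so that pairs of theta functions can be brought into a common canonical form.

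I would handle the right-hand side first. Applying \eqref{jtp} gives $f(-Q^2) = (Q^2;Q^2)_\infty$, $f(-P^2,-Q^2/P^2) = (P^2;Q^2)_\infty(Q^2/P^2;Q^2)_\infty(Q^2;Q^2)_\infty$, and $f(PQ,Q/P) = (-PQ;Q^2)_\infty(-Q/P;Q^2)_\infty(Q^2;Q^2)_\infty$. Substituting these and cancelling the common $(Q^2;Q^2)_\infty$ factor yields a clean infinite product representation of the right-hand side in base $Q^2$. For the left-hand side, each of $f(P^3Q,Q^4/P^3)$ and $f(Q/P^3,P^3Q^5)$ expands via \eqref{jtp} into a triple product, but in different bases. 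This is the first indication that the two terms on the left must be combined rather than simplified individually.

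The main obstacle is precisely this combination: the left-hand side is a \emph{difference} of two theta products in different bases, and it does not collapse to a single product by elementary rearrangement. My plan to overcome this is to invoke the addition theorem, Lemma \ref{l2.2}, applied to a suitable theta function, most naturally to $f(PQ,Q/P)$ with $N=2$ and parameters $\alpha,\beta$ chosen so that $\alpha\beta = Q^2$. This dissects $f(PQ,Q/P)$ into a sum of two pieces which, after renormalization using \eqref{elt1}, should match the two terms on the left-hand side term by term, with the $P^2$ prefactor emerging from the $U_1$ factor in the statement of Lemma \ref{l2.2}. The delicate step is the parameter matching: one must verify that the two dissected theta functions coincide, up to the explicit factors $1$ and $-P^2$, with $f(P^3Q,Q^4/P^3)$ and $f(Q/P^3,P^3Q^5)$ respectively, and then that the residual product factor on the right-hand side is exactly $f(-Q^2)f(-P^2,-Q^2/P^2)/f(PQ,Q/P)$.

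Once the correct dissection and alignment are achieved, the identity follows by clearing the denominator $f(PQ,Q/P)$ and comparing both sides as products in base $Q^2$. The only nontrivial computation is the bookkeeping on exponents of $P$ and $Q$ — essentially an arithmetic check that the six parameters match — after which everything else is a direct consequence of \eqref{jtp} and \eqref{elt1}. If this dissection approach turns out to be awkward (for instance, if the base $Q^2$ does not naturally emerge from the chosen parameters), I would fall back on expanding both sides as formal Laurent series in $P$ and verifying the resulting infinite family of $Q$-series identities coefficient by coefficient, again using \eqref{jtp} in reverse to recognize each coefficient.
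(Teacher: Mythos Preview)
The paper does not prove this lemma. Immediately after stating it, the author only remarks that Cooper's survey \cite{cooper} collects the known proofs, mentions Andrews' proof via the $_6\psi_6$ summation, Sun Kim's bijective proof \cite{sunkim}, and Cao's approach \cite{cao}. There is therefore no argument in the paper to compare your proposal against.

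As for the viability of your plan on its own: the specific instance you single out --- applying Lemma~\ref{l2.2} with $N=2$ to $f(PQ,Q/P)$, where $\alpha\beta=Q^2$ --- produces two theta pieces whose common base is $(\alpha\beta)^{N^2}=Q^8$, whereas the two theta functions on the left-hand side live in base $Q^6$ (note, incidentally, that the first argument pair in the paper's statement should read $Q^5/P^3$ rather than $Q^4/P^3$; with $Q^4/P^3$ the two bases do not even agree). So the ``term-by-term match'' you anticipate cannot occur from this particular dissection, and the prefactor $-P^2$ (a minus sign) is also not what a straight application of Lemma~\ref{l2.2} to $f(PQ,Q/P)$ yields. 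Proofs of the quintuple product identity via the addition theorem do exist, but they dissect a theta function that already encodes the full five-factor product on the right (so that the extra product $f(-P^2,-Q^2/P^2)$ is built in from the start), not $f(PQ,Q/P)$ alone; your fallback of coefficient comparison in $P$ is workable but amounts to a different proof entirely. In short, the outline is in the right spirit, but the key dissection step, as written, would fail and needs to be aimed at a different function.
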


In \cite{cooper}, Shawn Cooper studied the history and all the known proofs of the above result till 2006 systematically. In particular, mention may be made of Andrews' nifty proof where he uses the $_6\psi_6$ summation formula to derive the above result. It should be mentioned that Sun Kim has
 given the first combinatorial proof of the quintuple product identity in \cite{sunkim}. Zhu Cao  in \cite{cao} has developed a new technique using which he proves the quintuple product identity.

\begin{lemma}[Winquist's Identity]\label{wi}
 Following the notations given earlier, we have 
 \begin{align}
(a, q/a, b, q/b, ab, q/(ab), a/b, bq/a, q, q;q)_{\infty} &= f(-a^3, -q^3/a^3)\{f(-b^3q, -q^2/b^3)-bf(-b^2q^2, -q/b^3)\}\nonumber\\
 & \quad -ab^{-1}f(-b^3, -q^3/b^3)\{f(-a^3q, -q^2/a^3)\nonumber\\
 & \quad -af(-a^3q^2,-q/a^3)\}.\nonumber
 \end{align}
\end{lemma}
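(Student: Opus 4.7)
I would prove Winquist's identity via the classical Liouville-type strategy for theta function identities. Denote the left-hand side by $L(a,b)$ and the right-hand side by $R(a,b)$, and view both as Laurent series in $a$ with coefficients that are formal $q$-power series with $b$-Laurent dependence. My plan is to show that (i) $L$ and $R$ satisfy the same $q$-quasi-periodicity under $a \mapsto qa$, (ii) $L$ and $R$ have the same zeros inside a fundamental annulus $|q| < |a| \le 1$, hence their ratio is independent of $a$; the symmetric argument in $b$ then reduces this to an absolute constant, which a single coefficient comparison pins to $1$.

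For the quasi-periodicity, the computation on $L$ is direct: using $(qz;q)_\infty = (z;q)_\infty/(1-z)$, each of the three pairs $(z, q/z; q)_\infty$ with $z \in \{a, ab, a/b\}$ transforms under $a \mapsto qa$ by multiplication by $-z^{-1}$, and the product $(-1/a)(-1/(ab))(-b/a) = -a^{-3}$, while the factors $(b, q/b; q)_\infty$ and $(q;q)_\infty^2$ are inert. For $R$, I would apply the elementary identity \eqref{elt1} with $n = 1$ to each theta factor $f(-a^3 q^j, -q^{3-j}/a^3)$ for $j \in \{0, 1, 2\}$, obtaining transformation factors $-a^{-3}$, $-a^{-3}q^{-1}$, and $-a^{-3}q^{-2}$ respectively; combining these with the $q$ coming from $-ab^{-1} \mapsto -qab^{-1}$ in front of the second term, one finds that each of the two terms on the right scales by $-a^{-3}$, as required.

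For the zero matching, $L$ has simple zeros inside the fundamental annulus at $a \in \{1, b, b^{-1}\}$, coming respectively from the factors $(a;q)_\infty$, $(a/b;q)_\infty$, and $(ab;q)_\infty$. At $a = 1$, the Jacobi triple product \eqref{jtp} gives $f(-1, -q^3) = (1;q^3)_\infty (q^3;q^3)_\infty^2 = 0$, which kills the first term of $R$, while the inner bracket of the second term reduces to $f(-q, -q^2) - f(-q^2, -q) = 0$ by the symmetry $f(\alpha, \beta) = f(\beta, \alpha)$. At $a = b$, the inner brackets of the two terms of $R$ coincide and the prefactor $-ab^{-1}$ becomes $-1$, so the two terms are exact negatives of each other. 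The case $a = b^{-1}$ is the most delicate: here each $f(-a^3 q^j, -q^{3-j}/a^3)|_{a = b^{-1}}$ must first be rewritten via \eqref{elt1} in the form $(\text{monomial in } b, q) \cdot f(-b^3 q^{j'}, -q^{3-j'}/b^3)$, after which the contributions from the first and second terms of $R$ again cancel term by term.

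With matched quasi-periodicity and matched simple zeros in a fundamental annulus, the ratio $L/R$ is an entire $q$-periodic function of $a$, hence independent of $a$, and by symmetry a $(b, q)$-constant $c(q)$; extracting the lowest-order $q$-terms from the $a^0 b^0$ coefficient on both sides forces $c(q) = 1$. The principal technical obstacle is the verification at $a = b^{-1}$: unlike the cases $a = 1$ and $a = b$, it requires repeated invocation of \eqref{elt1} (or, alternatively, an application of the addition theorem of Lemma \ref{l2.2} with $N = 3$, which is tailored to a $3$-dissection and is visibly the source of the cubes on the right-hand side). Everything else is careful bookkeeping with \eqref{jtp} and \eqref{elt1}.
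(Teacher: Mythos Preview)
The paper does not prove Winquist's Identity: it merely states Lemma~\ref{wi} as a well-known result and cites Cao~\cite{cao2} for ``a new proof of this result using complex analysis and basic facts about $q$-series.'' There is therefore no in-paper argument to compare against.

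Your proposal is a correct outline of the standard Liouville-type proof and is in fact in the same genre as the Cao reference the paper points to. Your quasi-periodicity computations under $a\mapsto qa$ check out (each of the two terms of $R$ indeed scales by $-a^{-3}$), and your vanishing arguments at $a=1$ and $a=b$ are right. The case $a=b^{-1}$ does go through via \eqref{elt1} as you indicate; carrying it out explicitly is a few lines of bookkeeping but has no hidden obstruction. One point worth making explicit in your write-up: because the multiplier $-a^{-3}$ forces both $L$ and $R$ to be order-$3$ theta functions in $a$, each has exactly three zeros (with multiplicity) in the fundamental annulus, so once you have located the three zeros $\{1,b,b^{-1}\}$ of $R$ you know there are no others --- this is what makes $L/R$ pole-free, not just zero-free, and hence genuinely entire. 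After that, Liouville and a single coefficient comparison finish the job as you describe.
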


Like Lemma \ref{qpi}, this result is also very well known and was first used to prove \eqref{p3} by Winquist. Recently Cao, \cite{cao2} gave a new proof of this result using complex analysis and basic facts about $q$-series.

Now, we have almost all the details that we need for a detailed study of various results related to cranks in the following section.

\section{{Crank for the Partition Function}}

In this section, we shall systematically study the claims related to cranks, that are found in Ramanujan's Lost Notebook. We closely follow the wonderful exposition of Bruce C. Berndt, Heng Huat Chan, Song Heng Chan and Wen- Chin Liaw \cite{bcb1, bcb2} and \cite{bcb3}. We also take the help of the womderful exposition of Andrews and Berndt in \cite{lnb}. This section is an expansion of \cite{mps}.

\subsection{Cranks and Dissections in Ramanujan's Lost Notebook}

As mentioned in Section 1, the generating function for $M(m,n)$ is given by

\begin{equation}\label{gfcc}
\sum_{m=-\infty}^{\infty}\sum_{n=0}^{\infty}M(m,n)a^mq^n=\frac{(q;q)_{\infty}}{(aq;q)_{\infty}(q/a;q)_{\infty}}.  
 \end{equation}
 
Here $|q|<1, ~~|q|<|a|<1/|q|$. Ramanujan has recorded several entries about cranks, mostly about \eqref{gfcc}. At the top of page 179 in his lost notebook \cite{rama}, Ramanujan defines a function $F(q)$ and coefficient $\la_n$, $n\geq 0$ by 

\begin{equation}\label{faq}
F(q):=F_a(q)=\frac{(q;q)_\infty}{(aq;q)_\infty (q/a;q)_\infty}=: \sum_{n=0}^{\infty}\lambda_nq^n.
\end{equation}

Thus, by \eqref{gfcc}, for $n>1$, $$\la_n=\sum_{m=-\i}^\i M(m,n)a^m.$$ Then Ramanujan offers two congruences for $F(q)$. These, like others that are to follow are to be regarded as congruences in the ring of formal power series in the two variables $a$ and $q$. Before giving these congruences, we give the following definition.

\begin{definition}[Dissections]
If $$P(q):=\sum_{n=0}^{\infty}a_nq^n$$ is any power series, then the m-dissection of $P(q)$ is given by $$P(q)=\sum_{j=0}^{m-1}\sum_{n_j=0}^{\infty}a_{n_jm+j}q^{n_jm+j}.$$

For congruences we have $$P(q)\equiv\sum_{j=0}^{m-1}\sum_{n_j=0}^{\infty}a_{n_jm+j,r}q^{n_jm+j}~(\textup{mod}~r)$$ where r is any positive integer and $a_{n_jm+j}\equiv a_{n_jm+j,r}~(\textup{mod}~r)$.
  \end{definition}
We now state the two congruences that were given by Ramanujan below.

\begin{theorem}[$2$-dissection]\label{2dt}
 We have
 \begin{equation}\label{2d}
F_a(\sqrt{q})\equiv \frac{f(-q^3;-q^5)}{(-q^2;q^2)_{\infty}}+\displaystyle\left(a-1+\frac{1}{a}\displaystyle\right)\sqrt{q}\frac{f(-q,-q^7)}{(-q^2;q^2)_{\infty}}~\displaystyle\left(\textup{mod}~a^2+\frac{1}{a^2}\displaystyle\right). 
 \end{equation}
 \end{theorem}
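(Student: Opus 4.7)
The plan is to apply Ramanujan's formula (Theorem \ref{ram1}) with $q$ replaced by $\sqrt q$ and then exploit the period-$8$ behaviour of the Chebyshev-like sequence $A_n=a^n+a^{-n}$ modulo $A_2$. Clearing the denominator in Theorem \ref{ram1} gives
\[
(\sqrt q;\sqrt q)_\infty F_a(\sqrt q) = 1 - \sum_{m\ge1,\ n\ge0}(-1)^m q^{m(m+1)/4 + mn/2}(A_{n+1}-A_n).
\]
Because $A_2\equiv 0$ forces $a^4\equiv-1$, the sequence $(A_n)_{n\ge 0}$ is periodic modulo $A_2$ with period $8$, taking the values $(2,A_1,0,-A_1,-2,-A_1,0,A_1)$. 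Differencing then gives $A_{n+1}-A_n\equiv \epsilon_n+\eta_n A_1\pmod{A_2}$ where the period-$8$ sign sequences read $(\epsilon_n)=(-2,0,0,-2,2,0,0,2)$ and $(\eta_n)=(1,-1,-1,1,-1,1,1,-1)$. Since $F_a(\sqrt q)$ is symmetric under $a\leftrightarrow 1/a$, it lies in the subring generated by the $A_k$, which collapses to $\mathbb Z[A_1]$ modulo $A_2$; so matching constant and $A_1$ parts against a target of the form $c_0(q)+c_1(q)A_1$ is well defined.

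Next I would write $n=8k+j$ with $0\le j\le 7$ and sum the geometric series in $k$ via $\sum_{k\ge0}q^{4mk}=1/(1-q^{4m})$. Short calculations give the clean factorizations $\sum_{j=0}^{7}\epsilon_j q^{mj/2}=-2(1-q^{2m})(1+q^{3m/2})$ and $\sum_{j=0}^{7}\eta_j q^{mj/2}=(1-q^{m/2})(1-q^m)(1-q^{2m})$, and after dividing by $1-q^{4m}=(1-q^{2m})(1+q^{2m})$ one obtains, modulo $A_2$,
\begin{align*}
(\sqrt q;\sqrt q)_\infty F_a(\sqrt q) \equiv 1 &+ 2\sum_{m\ge1}\frac{(-1)^m q^{m(m+1)/4}(1+q^{3m/2})}{1+q^{2m}}\\
&- A_1\sum_{m\ge1}\frac{(-1)^m q^{m(m+1)/4}(1-q^{m/2})(1-q^m)}{1+q^{2m}}.
\end{align*}
Meanwhile the right-hand side of the theorem is $(E(q)-\sqrt q\,O(q))+A_1\cdot\sqrt q\,O(q)$, where $E(q)=f(-q^3,-q^5)/(-q^2;q^2)_\infty$ and $O(q)=f(-q,-q^7)/(-q^2;q^2)_\infty$. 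By $\mathbb Z$-linear independence of $\{1,A_1\}$ modulo $A_2$, the claim reduces to verifying one identity for the $a$-free piece and one for the coefficient of $A_1$.

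The hard part will be this final identification of each Lambert-type series with its claimed theta product. I would expand the targets via the Jacobi triple product identity \eqref{jtp}, writing $f(-q^3,-q^5)=(q^3;q^8)_\infty(q^5;q^8)_\infty(q^8;q^8)_\infty$ and similarly $f(-q,-q^7)=(q;q^8)_\infty(q^7;q^8)_\infty(q^8;q^8)_\infty$, and simplify the denominators via $(\sqrt q;\sqrt q)_\infty=(q^{1/2};q)_\infty(q;q)_\infty$ and $(-q^2;q^2)_\infty=(q^4;q^4)_\infty/(q^2;q^2)_\infty$ to place everything over a common base. Ramanujan's addition theorem (Lemma \ref{l2.2}), applied with parameters producing a $4$-dissection of a theta function of base $q^4$, is the natural tool for collapsing the Lambert sums into theta quotients: the exponent $m(m+1)/4$ combined with the shift $m/2$ in the numerators is precisely the signature of such a hidden $q^4$-based theta function, and if the addition formula alone is not enough, the quintuple product identity (Lemma \ref{qpi}) is the next natural step. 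A low-order numerical sanity check, matching coefficients of $1,\sqrt q,q,\ldots,q^{7/2}$ against the values of $\lambda_n\bmod A_2$ computed directly from the definition of the crank, both confirms the formula and guides the correct regrouping of terms.
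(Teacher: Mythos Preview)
Your route is genuinely different from the paper's, and the reduction you carry out is correct as far as it goes. The paper does \emph{not} pass through Theorem~\ref{ram1} or any Lambert series. Instead it works directly with the infinite product: expanding each factor $(1-aq^n)^{-1}(1-q^n/a)^{-1}$ as a geometric series and reducing powers of $a$ modulo $A_2$ (using $a^8\equiv 1$) collapses the product to
\[
\frac{(q;q)_\infty}{(aq;q)_\infty(q/a;q)_\infty}\equiv\frac{(q;q)_\infty}{(-q^4;q^4)_\infty}\,(-aq;q)_\infty(-q/a;q)_\infty\pmod{A_2},
\]
and then a single application of the addition theorem (Lemma~\ref{l2.2}) with $(\alpha,\beta,N)=(a,q/a,8)$ to $f(a,q/a)=(q;q)_\infty(-aq;q)_\infty(-q/a;q)_\infty$ produces $f(-q^6,-q^{10})+(A_1-1)qf(-q^2,-q^{14})$ immediately. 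No Lambert-series-to-theta identification is needed.

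Your computation of the period-$8$ values of $A_n\bmod A_2$, the differencing, and the two factorizations $\sum_j\epsilon_j x^j=-2(1-x^4)(1+x^3)$ and $\sum_j\eta_j x^j=(1-x)(1-x^2)(1-x^4)$ are all correct, and the resulting Lambert-type expression for $(\sqrt q;\sqrt q)_\infty F_a(\sqrt q)$ is right. The gap is precisely where you flag it: you have not actually proved the two identities equating your Lambert sums with $(\sqrt q;\sqrt q)_\infty$ times the claimed theta quotients. Saying that Lemma~\ref{l2.2} or Lemma~\ref{qpi} ``is the natural tool'' and that numerical checks ``guide the regrouping'' is a plan, not a proof; these two identities are not obvious consequences of the cited lemmas, and as stated the argument is incomplete. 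If you want to salvage this route, the cleanest way is to reverse-engineer it from the paper's: once you know $(q;q)_\infty(-aq;q)_\infty(-q/a;q)_\infty\equiv f(-q^6,-q^{10})+(A_1-1)qf(-q^2,-q^{14})$ from Lemma~\ref{l2.2}, substituting back into Theorem~\ref{kac} (equivalently Theorem~\ref{ram1}) yields your Lambert identities as corollaries rather than as obstacles.
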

 
  We note that $\lambda_2=a^2+a^{-2}$, which trivially implies that $a^4\equiv -1~(\textup{mod}~\lambda_2)$ and $a^8\equiv 1~(\textup{mod}~\lambda_2)$. Thus, in \eqref{2d} 
 $a$ behaves like a primitive 8th root of unity modulo $\lambda_2$. If we let $a=\textup{exp}(2\pi i/8)$ and replace $q$ by $q^2$ in the definition of a dissection, \eqref{2d} will give the $2$-dissection of $F_a(q)$.

\begin{theorem}[$3$-dissection]\label{3dt}
We have
\begin{align}
 F_q(q^{1/3}) & \equiv \frac{f(-q^2,-q^7)f(-q^4,-q^5)}{(q^9;q^9)_{\infty}}+\displaystyle\left(a-1+1/a\displaystyle\right)q^{1/3}\frac{f(-q,-q^8)f(-q^4,-q^5)}{(q^9;q^9)_{\infty}}\nonumber\\
 & \quad +\displaystyle\left(a^2+\frac{1}{a^2}\displaystyle\right)q^{2/3}\frac{f(-q,-q^8)f(-q^2,-q^7)}{(q^9;q^9)_{\infty}}~\displaystyle\left(\textup{mod}~a^3+1+\frac{1}{a^3}\displaystyle\right).\label{3d}  
\end{align}
\end{theorem}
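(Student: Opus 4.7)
The strategy is to proceed in parallel with the proof of the $2$-dissection formula in Theorem \ref{2dt}, replacing roots of unity of order $8$ by those of order $9$. The starting point will be Ramanujan's series expansion (Theorem \ref{ram1}) with $q$ replaced by $q^{1/3}$,
$$F_a(q^{1/3}) = \frac{1}{(q^{1/3};q^{1/3})_\infty}\left(1 - \sum_{m=1}^{\infty}\sum_{n=0}^{\infty}(-1)^m q^{(m(m+1)/2+mn)/3}(A_{n+1}-A_n)\right),$$
where $A_n=a^n+a^{-n}$. The key arithmetic fact is that modulo $a^3+1+a^{-3}$ the element $a$ behaves like a primitive $9$-th root of unity: multiplying $a^3+1+a^{-3}\equiv 0$ by $a^3$ yields $a^6+a^3+1\equiv 0$, so that $a^9\equiv 1$. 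In particular $A_3\equiv A_6\equiv -1$, and, more importantly, $A_n+A_{n+3}+A_{n+6}\equiv 0$ for every integer $n$, so that the infinite family $\{A_n\}$ reduces to a $\mathbb{Q}$-linear combination of the three elements $1$, $a+a^{-1}$, $a^2+a^{-2}$.

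I would then partition the double series according to the residue of $m(m+1)/2+mn$ modulo $3$, regrouping the terms so that each sub-series carries a definite fractional power of $q^{1/3}$. After using the reductions above to simplify each $A_{n+1}-A_n$, the resulting expression has the shape
$$F_a(q^{1/3})\equiv A(q)+(a-1+a^{-1})q^{1/3}B(q)+(a^2+a^{-2})q^{2/3}C(q)\pmod{a^3+1+a^{-3}},$$
with $A(q),B(q),C(q)$ certain honest power series in $q$; the choice of basis $\{1,\,a-1+a^{-1},\,a^2+a^{-2}\}$ in place of the more naive $\{1,\,a+a^{-1},\,a^2+a^{-2}\}$ amounts to absorbing a $-1$ contribution of the constant term into the coefficient of $q^{1/3}$, and is dictated by what makes the subsequent theta-function matching cleanest.

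The final and most delicate step is to identify $A(q)$, $B(q)$, $C(q)$ with the theta-function quotients appearing in \eqref{3d}. For this I would $3$-dissect each of the theta series $f(-q,-q^8)$, $f(-q^2,-q^7)$, $f(-q^4,-q^5)$ using the addition formula, Lemma \ref{l2.2}, with $N=3$, and then recombine using the Jacobi triple product \eqref{jtp} together with the elementary shift \eqref{elt1}. The denominator $(q^9;q^9)_\infty$ should emerge naturally from cancelling $(q^{1/3};q^{1/3})_\infty$ against the surplus copies of $(q^9;q^9)_\infty$ supplied by the triple-product expansions of the three functions $f(-q^k,-q^{9-k})$. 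I anticipate this last step being the main obstacle: the Lambert-type double series coming from Ramanujan's expansion must collapse exactly to the products of \emph{two} theta functions on the right-hand side of \eqref{3d}, and to bridge this gap I expect to invoke the quintuple product identity (Lemma \ref{qpi}) in order to consolidate the cross-terms that otherwise appear to resist a clean factorisation.
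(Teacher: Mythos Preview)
Your plan diverges from the paper in a way that creates a real gap. First, a minor point: you say you are ``proceeding in parallel with the proof of the $2$-dissection,'' but the paper's proof of Theorem~\ref{2dt} (via Theorem with label \eqref{2da}) does \emph{not} start from Ramanujan's Lambert-type expansion in Theorem~\ref{ram1}. It uses the \emph{rationalization} method: working modulo the relevant cyclotomic-like relation, one multiplies numerator and denominator by the missing conjugate products so that the denominator collapses to a single infinite product, and then applies the Jacobi triple product together with the addition formula (Lemma~\ref{l2.2}) to dissect the resulting theta functions. The paper explicitly says the $3$-dissection (Theorem~\ref{3da}) is obtained ``using similar techniques, but with more complicated manipulations''; in concrete terms, since $a^9\equiv 1\pmod{A_3+1}$, one writes
\[
\frac{(q;q)_\infty}{(aq;q)_\infty(q/a;q)_\infty}\;\equiv\;
\frac{(q;q)_\infty^2\,(a^2q,q/a^2;q)_\infty(a^3q,q/a^3;q)_\infty(a^4q,q/a^4;q)_\infty}{(q^9;q^9)_\infty}
\pmod{A_3+1},
\]
recognises each pair of products as a theta function via \eqref{jtp}, and then $3$-dissects with Lemma~\ref{l2.2}. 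Everything stays in product form, so the denominator $(q^9;q^9)_\infty$ (or $(q^{27};q^{27})_\infty$ in the form of Theorem~\ref{3da}) is present from the outset rather than having to ``emerge naturally.''

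Your route through Theorem~\ref{ram1} runs into trouble exactly where you anticipate it. After $3$-dissecting the exponent $m(m+1)/2+mn$ and simultaneously reducing $A_{n+1}-A_n$ using the $9$-periodicity in $n$, each of your $A(q)$, $B(q)$, $C(q)$ is a genuine \emph{double} series over coupled arithmetic progressions in $(m,n)$, sitting over the denominator $(q^{1/3};q^{1/3})_\infty$ rather than $(q^9;q^9)_\infty$. Collapsing such double sums into a product of \emph{two} theta functions of the shape $f(-q^i,-q^{9-i})f(-q^j,-q^{9-j})$ is not something the quintuple product identity (Lemma~\ref{qpi}) can do: that identity converts a \emph{single} indefinite theta combination into a product, and there is no mechanism in your outline for factoring the double sum or for trading $(q^{1/3};q^{1/3})_\infty$ for $(q^9;q^9)_\infty$. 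As written, the ``most delicate step'' is not a detail to be filled in but the entire content of the theorem, and the tools you name are not adequate for it. If you want to salvage the Lambert-series approach you would need a separate bilinear theta identity of Hecke type tailored to this case; it is far simpler to switch to the rationalization argument the paper actually uses.
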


Again we note that, $\lambda_3=a^3+1+\frac{1}{a^3}$, from which it follows that $a^9\equiv -a^6-a^3\equiv 1~(\textup{mod}~\lambda_3)$. So in \eqref{3d}, $a$ behaves 
 like a primitive 9th root of unity. While if we let $a=\textup{exp}(2\pi i/9)$ and replace $q$ by $q^3$ in the definition of a dissection, \eqref{3d} will give the $3$-dissection of $F_a(q)$.

In contrast to \eqref{2d} and \eqref{3d}, Ramanujan offered the $5$-dissection in terms of an equality. 

\begin{theorem}[$5$-dissection]\label{5dt}
We have
 \begin{align}
 F_a(q) &= \frac{f(-q^2,-q^3)}{f^2(-q,-q^4)}f^2(-q^5)-4\cos^2(2n\pi/5)q^{1/5}\frac{f^2(-q^5)}{f(-q,-q^4)}\\ \nonumber
 &\quad +2\cos(4n\pi/5)q^{2/5}\frac{f^2(-q^5)}{f(-q^2,-q^3)}-2\cos(2n\pi/5)q^{3/5}\frac{f(-q,-q^4)}{f^2(-q^2,-q^3)}f^2(-q^5).\label{5d}
 \end{align}

\end{theorem}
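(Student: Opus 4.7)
The coefficients $2\cos(2n\pi/5)$ and $2\cos(4n\pi/5)=4\cos^2(2n\pi/5)-2$ on the right-hand side indicate that the identity is to be read after specializing $a=e^{2\pi in/5}$, a primitive $5$-th root of unity, so that $a+a^{-1}=2\cos(2n\pi/5)$ and $a^2+a^{-2}=2\cos(4n\pi/5)$. The starting point is the elementary identity $\prod_{k=0}^{4}(1-\omega^k x)=1-x^5$ for $\omega$ a primitive $5$-th root of unity, which, taken as an infinite product in $x=q^m$, gives $\prod_{k=0}^{4}(\omega^k q;q)_\infty=(q^5;q^5)_\infty$. Since the four non-trivial $5$-th roots are exactly $a^{\pm 1}$ and $a^{\pm 2}$, this collapses the denominator of $F_a(q)$ and yields
$$F_a(q)=\frac{(q;q)_\infty^{\,2}\,(a^2q;q)_\infty\,(q/a^2;q)_\infty}{(q^5;q^5)_\infty}.$$

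To obtain a series representation, I would apply Jacobi's triple product identity in the form $(q;q)_\infty(z;q)_\infty(q/z;q)_\infty=\sum_m(-1)^m z^m q^{m(m-1)/2}$ at $z=a^2$, after extracting the factor $1-a^2$ from $(a^2;q)_\infty$. The crucial step is then a $5$-dissection of the resulting theta-series: writing $m=5\ell+j$ with $j\in\{0,1,2,3,4\}$ and using $a^{10}=1$, each inner sum in $\ell$ collapses to a single theta function $q^{j(j-1)/2}f(-q^{10+5j},-q^{15-5j})$, as one verifies by matching exponents against $f(A,B)=\sum_\ell A^{\ell(\ell+1)/2}B^{\ell(\ell-1)/2}$. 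One residue class vanishes automatically because $f(c,-1)=0$, and the residues producing negative exponents in $q$ are brought to canonical form using the elementary shift \eqref{elt1}.

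To match the right-hand side of the theorem I would then substitute $q\mapsto q^{1/5}$ (as in Theorems \ref{2dt} and \ref{3dt}, where the identity is really one for $F_a(q^{1/5})$) and use the identity $f(-q,-q^4)\,f(-q^2,-q^3)=(q;q)_\infty\,(q^5;q^5)_\infty$, obtained by splitting $(q;q)_\infty$ according to residues modulo $5$, to rewrite the prefactor $(q;q)_\infty/(q^5;q^5)_\infty$ in terms of the theta functions appearing in the statement. The symmetric polynomials in $a,\,a^{-1}$ finally become the advertised cosines. An attractive alternative route uses the quintuple product identity (Lemma \ref{qpi}), which is tailor-made for producing $5$-dissections; Lemma \ref{l2.2} at $N=5$ provides yet another path via a direct $5$-term splitting of a theta function.

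The main obstacle will be the algebraic bookkeeping in the final identification. The four terms on the right-hand side form a geometric progression in the Rogers--Ramanujan continued fraction $R(q)=q^{1/5}f(-q,-q^4)/f(-q^2,-q^3)$, and one must arrange the surviving theta-function contributions so that their coefficients assemble precisely to $1,\,-(a+a^{-1})^2,\,a^2+a^{-2},\,-(a+a^{-1})$, and simultaneously verify that the putative $q^{4/5}$ contribution cancels in its entirety. Once this organization is in place the remaining manipulations are routine; it is the combinatorial matching of dissection pieces to the exact four-term pattern that constitutes the genuinely delicate part of the argument.
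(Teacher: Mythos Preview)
Your approach is sound and in fact mirrors the paper's own methodology. The paper does not give a detailed proof of Theorem~\ref{5dt}; it only records (for the equivalent congruence form, Theorem~\ref{5da}) that the argument ``uses the famous Rogers--Ramanujan continued fraction identities, which we shall omit here,'' referring to \cite{bcb1} for details. Your rationalization step
\[
F_a(q)=\frac{(q;q)_\infty^{\,2}\,(a^2q;q)_\infty\,(q/a^2;q)_\infty}{(q^5;q^5)_\infty}
\]
is exactly the manoeuvre the paper carries out explicitly for the $7$-dissection (compare \eqref{6.2}), and your subsequent use of Jacobi's triple product followed by splitting the theta sum into residue classes modulo $5$ parallels \eqref{6.3}--\eqref{6.4}. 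You also correctly identify the one extra ingredient the $5$-case requires over the $7$-case: the residual prefactor $(q;q)_\infty/(q^5;q^5)_\infty$ must be rewritten in terms of $f(-q,-q^4)$ and $f(-q^2,-q^3)$, and organizing the four surviving pieces as a geometric progression in $R(q)=q^{1/5}f(-q,-q^4)/f(-q^2,-q^3)$ is precisely where the Rogers--Ramanujan identities enter. So your outline and the paper's indicated route coincide; what you flag as ``the genuinely delicate part'' is exactly what the paper chooses to omit.
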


 We observe that (2.13) has no term with $q^{4/5}$, which is a reflection of \eqref{p1}. In fact, one can replace (2.13) by a congruence and in turn \eqref{2d} and \eqref{3d} by equalities. This is done in \cite{bcb1}. Ramanujan did not specifically give the $7$- and $11$-dissections of $F_a(q)$ in \cite{rama}.  However, he vaugely gives some of the coefficients occuring in those dissections. Uniform proofs of these dissections and the others already stated earlier are given in \cite{bcb1}.
 
 Ramanujan gives the $5$-dissection of $F(q)$ on page 20 of his lost notebook \cite{rama}. It is interesting to note that he does not give the alternate form analogous to those of \eqref{2d} amd \eqref{3d}, from which the $5$-dissection will follow if we set $a$ to be a primitive fifth root of unity. On page 59 in his lost notebook \cite{rama}, Ramanujan has recorded a quotient of two power series, with the highest power of the numerator being $q^{21}$ and the highest power of the denominator being $q^{22}$. Underneath he records another power series with the highest power being $q^5$. Although not claimed by him, the two expressions are equal. This claim was stated in the previous section as Theorem \ref{ram1}.
 
In the following, we shall use the results and notations that we discussed in Section 1. It must be noted that some of these results have been proved by numerous authors, but we follow the exposition of Berndt, Chan, Chan and Liaw \cite{bcb1}.
\begin{theorem}
We have
\begin{equation}\label{2da}
F_a(q)\equiv \frac{f(-q^6,-q^{10})}{(-q^4;q^4)_\i}+\displaystyle\left(a-1+\frac{1}{a}\displaystyle\right)q\frac{f(-q^2,-q^{14})}{(-q^4;q^4)_\i}~(\textup{mod}~A_2).
\end{equation}
\end{theorem}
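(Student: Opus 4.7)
The plan is to apply Ramanujan's identity in Theorem~\ref{ram1} to $F_a(q)$, reduce the coefficients $A_{n+1}-A_n$ modulo $A_2 = a^2 + a^{-2}$, and repackage the resulting double sum as a pair of Ramanujan theta functions. The conceptual point is that modulo $A_2$ the variable $a$ behaves like a primitive $8$-th root of unity, so the $8$-periodicity in $n$ of the reduced coefficients must match the $8$-character of the $q$-exponents appearing in $f(-q^6,-q^{10})$ and $f(-q^2,-q^{14})$.

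To set up the reduction, observe that $A_2 \equiv 0 \pmod{A_2}$ forces $a^4 \equiv -1$ and $a^8 \equiv 1$, so a direct computation yields
$$(A_0, A_1, A_2, A_3, A_4, A_5, A_6, A_7) \equiv (2,\, A_1,\, 0,\, -A_1,\, -2,\, -A_1,\, 0,\, A_1) \pmod{A_2},$$
and hence
$$A_{n+1} - A_n \equiv (A_1 - 1)\,\epsilon_n + \delta_n \pmod{A_2},$$
where $(\epsilon_n)$ and $(\delta_n)$ are the explicit $\{\pm 1\}$-valued, period-$8$ sequences obtained by reading off the eight differences above. Note that $A_1 - 1 = a + a^{-1} - 1$ is exactly the coefficient $a - 1 + 1/a$ that appears on the right-hand side of \eqref{2da}.

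Substituting this decomposition into Theorem~\ref{ram1} splits the double sum into two contributions. I would split each inner $n$-sum according to residues modulo $8$, carry out the resulting geometric-type summations, and re-index the outer $m$-sum to expose the bilateral theta series $\sum_{k}(-1)^k q^{8k^2 - 2k}$ and $\sum_{k}(-1)^k q^{8k^2 - 6k + 1}$. By Jacobi's triple product identity \eqref{jtp} together with the elementary identity \eqref{elt1}, these bilateral sums equal $f(-q^6, -q^{10})$ and $q\,f(-q^2, -q^{14})$, respectively. Dividing through by $(q;q)_\infty$ and applying \eqref{jtp} once more to collapse the residual product to $1/(-q^4; q^4)_\infty$ then yields \eqref{2da}.

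The main obstacle is the combinatorial bookkeeping in this last step: one has to match the $q$-exponents $m(m+1)/2 + mn$ that arise in Ramanujan's identity, filtered by the residue of $n$ modulo $8$, with the two parabolic exponent sequences $8k^2 - 2k$ and $8k^2 - 6k + 1$ that drive the theta functions on the right-hand side. A much cleaner alternative, once Theorem~\ref{2dt} is in hand, is to observe that \eqref{2da} is simply \eqref{2d} with $q$ replaced by $q^2$: the substitution sends $F_a(\sqrt q) \mapsto F_a(q)$, $\sqrt q \mapsto q$, and doubles every $q$-exponent on the right-hand side termwise, so that the two statements are logically equivalent. I would present the direct derivation via Theorem~\ref{ram1} as the primary argument, since it illustrates the method that will be used repeatedly in the sequel, and include the $q \mapsto q^2$ shortcut at the end as a consistency check.
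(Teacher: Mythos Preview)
Your route via Theorem~\ref{ram1} is genuinely different from the paper's, which instead ``rationalizes'' the product: one shows directly that
\[
\frac{1}{(aq;q)_\infty(q/a;q)_\infty}\equiv \frac{(-aq;q)_\infty(-q/a;q)_\infty}{(-q^4;q^4)_\infty}\pmod{A_2},
\]
and then expands $(q;q)_\infty(-aq;q)_\infty(-q/a;q)_\infty=f(a,q/a)$ by the addition formula of Lemma~\ref{l2.2} with $N=8$, which produces $f(-q^6,-q^{10})+(A_1-1)\,q\,f(-q^2,-q^{14})$ immediately. The denominator $(-q^4;q^4)_\infty$ is thus already in place before any theta expansion happens.

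In your plan, by contrast, the passage ``carry out the geometric summations and re-index the outer $m$-sum to expose the bilateral theta series'' is the step that does not go through as written. After reducing $A_{n+1}-A_n$ modulo $A_2$ and summing the inner geometric series in $n$, what you obtain is
\[
(q;q)_\infty F_a(q)\;\equiv\;1-\sum_{m\ge1}(-1)^m q^{m(m+1)/2}\,\frac{R(q^m)}{1+q^{4m}}\pmod{A_2},
\]
a Lambert-type series with denominators $1+q^{4m}$, not a bilateral theta sum; the exponents $m(m+1)/2+mn$ never line up with $8k^2\pm ck$ under any re-indexing of $m$ alone. There is also no ``residual product'' left over to collapse into $1/(-q^4;q^4)_\infty$ via~\eqref{jtp}: that factor has to be manufactured, and your outline gives no mechanism for it. Turning this Lambert series into a quotient of theta functions by an eta-type product is possible in principle, but it is a substantial identity in its own right, essentially equivalent in difficulty to what you are trying to prove, not routine bookkeeping.

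Finally, the ``shortcut'' through Theorem~\ref{2dt} is circular here: in the paper \eqref{2d} is stated without proof and is established precisely by proving \eqref{2da} and then undoing $q\mapsto q^2$, so you cannot invoke \eqref{2d} as an independent input. If you want a clean argument, adopt the rationalization-plus-Lemma~\ref{l2.2} strategy; your modular analysis of $A_n$ is correct and is exactly what that argument uses, but applied to the product side rather than to Theorem~\ref{ram1}.
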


We note that \eqref{2da} is equivalent to \eqref{2d} if we replace $\sqrt q$ by $q$. We shall give a proof of this result using a method of \textit{rationalization}. This method does not work in general, but only for those $n$-dissections where $n$ is \textit{small}.

\begin{proof}
Throughout the proof, we assume that $|  q|  <|  a |  <1/|  q| $ and we shall also frequently use the facts that $a^4\equiv -1$ modulo $A_2$ and that $a^8\equiv 1$ modulo $A_2$. We write
\begin{equation}\label{3.2}
\frac{(q;q)_\i}{(aq;q)_\i(q/a;q)_\i}=(q;q)_\i\prod_{n=1}^\i\displaystyle\left(\sum_{k=0}^\i(aq^n)^k\displaystyle\right)\displaystyle\left(\sum_{k=0}^\i(q^n/a)^k\displaystyle\right).
\end{equation} We now subdivide the series under product sign into residue classes modulo $8$ and then sum the series. Using repeatedly congruences modulo $8$ for the powers of $a$, we shall obtain from \eqref{3.2} the following
\begin{equation}\label{3.3}
\frac{(q;q)_\i}{(aq;q)_\i(q/a;q)_\i}\equiv \frac{(q;q)_\i}{(-q^4;q^4)_\i}\prod_{n=1}^\i(1+aq^n)(1+q^n/a)~(\textup{mod}~A_2),
\end{equation}upon multiplying out the polynomials in the product that we obtain and using the congruences for powers of $a$ modulo $A_2$.

Now, using Lemma \ref{l2.2} with $\al=a, \be=q/a$ amd congruences for powers of $a$ modulo $A_2$, we shall find after some simple manipulations $$(q;q)_\i(-aq;q)_\i(-q/a;q)_\i\equiv f(-q^6,-q^{10})+(A_1-1)qf(-q^2,-q^{14})~(\textup{mod}~A_2).$$ Using \eqref{3.3} in the above, we shall get the desired result.
\end{proof}

Using similar techniques, but with more complicated manipulations, we shall be able to find the following theorem.

\begin{theorem}\label{3da}
We have
\begin{align}
 F_a(q) & \equiv \frac{f(-q^6,-q^{21})f(-q^{12},-q^{15})}{(q^{27};q^{27})_{\infty}}+(A_1-1)q\frac{f(-q^3,-q^{24})f(-q^{12},-q^{15})}{(q^{27};q^{27})_{\infty}}\nonumber\\
 & \quad +A_2q^3\frac{f(-q^3,-q^{24})f(-q^{6},-q^{21})}{(q^{27};q^{27})_{\infty}}~(\textup{mod}~A_3+1).\nonumber
\end{align}
\end{theorem}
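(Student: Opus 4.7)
The plan is to follow the same rationalization strategy as in the proof of the preceding theorem (the $2$-dissection of $F_a(q)$ modulo $A_2$), replacing the modulus $A_2$ by $A_3+1$. The key algebraic facts in the new setting are $a^9\equiv 1$ and $a^6+a^3+1\equiv 0\pmod{A_3+1}$, which together say that $a^3$ behaves as a primitive cube root of unity modulo the modulus. Useful consequences are $A_3\equiv A_6\equiv -1$ and $A_4\equiv A_5\equiv -A_1-A_2\pmod{A_3+1}$.

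I would first write
\[
\frac{(q;q)_\infty}{(aq;q)_\infty(q/a;q)_\infty}=(q;q)_\infty\prod_{n=1}^\infty\Bigl(\sum_{k=0}^\infty(aq^n)^k\Bigr)\Bigl(\sum_{k=0}^\infty(q^n/a)^k\Bigr)
\]
and subdivide each inner geometric series into residue classes modulo $9$; the $(aq^n)^9$- and $(q^n/a)^9$-tails collapse thanks to $a^9\equiv 1$, and the intermediate factors $1+a^{\pm 3}q^{3n}+a^{\pm 6}q^{6n}$ simplify using $a^6+a^3+1\equiv 0$, yielding in particular
\[
(1-a^3q^{3n})(1-q^{3n}/a^3)\equiv 1+q^{3n}+q^{6n}\pmod{A_3+1}.
\]
After taking the product over $n$, this rewrites $F_a(q)$ modulo $A_3+1$ in a form whose $a$-dependence is concentrated in a single theta-type factor in $a^3$ with base $q^3$, which is made explicit via Jacobi's triple product identity \eqref{jtp}.

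Next, I would apply Lemma \ref{l2.2} with $N=3$ and parameters chosen so that the final theta functions have $ab=q^{27}$. This decomposes the remaining theta function into three summands indexed by $k=0,1,2$, each carrying a coefficient of the form $a^{3k}q^{3k(k-1)/2}$ and a new theta function whose arguments multiply to $q^{27}$. After applying the shift identity \eqref{elt1} to realign arguments, these three pieces should match exactly the three products of two theta functions drawn from $\{f(-q^3,-q^{24}),f(-q^6,-q^{21}),f(-q^{12},-q^{15})\}$ (each summand omitting precisely one). Combining the $a^{3k}$ prefactors with the residual $a$-dependence and invoking the symmetry $F_{1/a}(q)=F_a(q)$, the contributions of $a^{3k}$ and $a^{-3k}$ merge into $A_{3k}$, which then reduce via $A_3\equiv A_6\equiv -1$ to the stated coefficients $1,\,A_1-1,\,A_2$ on the right-hand side.

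The main obstacle will be the final coefficient-tracking. In the $2$-dissection proof only the single combination $A_1-1$ had to emerge, whereas here two distinct nontrivial symmetric expressions $A_1-1$ and $A_2$ must appear simultaneously, so the $a$-polynomial manipulations are appreciably more delicate. Once the algebraic skeleton is in place, the matching of theta-function products against the three stated pairs is a routine application of \eqref{jtp} and \eqref{elt1}.
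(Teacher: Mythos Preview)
Your proposal has a genuine gap at the heart of the rationalization step. You claim that after splitting the geometric series into residue classes modulo $9$ and simplifying the $a^{\pm3}$-pieces, the $a$-dependence of $F_a(q)$ collapses into ``a single theta-type factor in $a^3$ with base $q^3$''. It does not. Carrying out your computation, one finds
\[
\frac{1}{(1-aq^n)(1-q^n/a)}\equiv\frac{(1-q^{3n})}{(1-q^{9n})}\,(1+aq^n+a^2q^{2n})(1+a^{-1}q^n+a^{-2}q^{2n})\pmod{A_3+1},
\]
and the surviving factors $(1+aq^n+a^2q^{2n})(1+a^{-1}q^n+a^{-2}q^{2n})$ still carry genuine $a,a^2$-dependence. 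If you convert them back to infinite products via $(1+aq^n+a^2q^{2n})=(1-a^3q^{3n})/(1-aq^n)$, the identity becomes circular: you recover $F_a(q)$ on the right, having gained nothing. In particular no single theta function $f(\pm a^3,\pm q^3/a^3)$ materializes.

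There is also a structural mismatch in the next step. Applying Lemma~\ref{l2.2} once with $N=3$ to a \emph{single} theta function yields a sum of three \emph{single} theta functions; it cannot produce the three \emph{products of two} theta functions that appear on the right-hand side of the theorem, and those three products share no common factor that an $a$-free prefactor could supply. The paper's route (explicit in the $7$-dissection proof and alluded to here by ``similar techniques, but with more complicated manipulations'') is instead to rationalize by multiplying numerator and denominator by the conjugate factors $(a^kq;q)_\infty(q/a^k;q)_\infty$, $k=2,3,4$, reduce the $k=3$ piece using $A_3\equiv-1$, and arrive at
\[
F_a(q)\equiv\frac{(q^3;q^3)_\infty}{(q;q)_\infty(q^9;q^9)_\infty}\cdot\frac{f(-a^2,-q/a^2)}{1-a^2}\cdot\frac{f(-a^4,-q/a^4)}{1-a^4}\pmod{A_3+1}.
\]
One then applies Lemma~\ref{l2.2} to \emph{each} of the two theta factors and multiplies out; this is what creates the bilinear products $f\cdot f$ in the statement and is exactly the extra layer of ``more complicated manipulations'' the paper refers to.
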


When $a=e^{2\pi i/9}$, the above theorem gives us Theorem \ref{3dt}. For the remainder of this section, we shall use the following notation
\begin{equation}\label{5.1}
S_n(a):=\sum_{k=-n}^n a^k.
\end{equation}
We note that, when $p$ is an odd prime then $$S_{(p-1)/2}(a)=a^{(1-p)/2}\Phi_p(a),$$ where $\Phi_n(a)$ is the minimal, monic polynomial for a primitive $n$th root of unity. We now give the $5$-dissection in terms of a congruence.

\begin{theorem}\label{5da}
With $f(-q)$, $S_2$ and $A_n$ as defined earlier, we have
\begin{align}
F_a(q) & \equiv \frac{f(-q^{10},-q^{15})}{f^2(-q^5,-q^{20})}f^2(-q^{25})+(A_1-1)q\frac{f^2(-q^{25})}{f(-q^5,-q^{20})}\nonumber\\
& \quad +A_2q^2\frac{f^2(-q^{25})}{f(-q^{10},-q^{15})}-A_1q^3\frac{f(-q^5, -q^{20})}{f^2(-q^{10},-q^{15})}f^2(-q^{25})~(\textup{mod}~S_2).\nonumber
\end{align}
\end{theorem}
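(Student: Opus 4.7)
To prove Theorem \ref{5da} I would work throughout in the quotient ring where $S_2(a) = 1 + A_1 + A_2 = a^{-2}\Phi_5(a)$ vanishes, so that $a$ behaves as a primitive fifth root of unity. In particular $a^5 \equiv 1$, so $A_{n+5} \equiv A_n$, giving the periodic cycle $A_0 \equiv 2$, $A_1$, $A_2$, $A_3 \equiv A_2$, $A_4 \equiv A_1$. Moreover $A_1^2 = A_2 + 2 \equiv 1 - A_1 \pmod{S_2}$, and $A_1 A_2 \equiv -1 - A_1 \pmod{S_2}$. These reductions will be essential later for matching coefficients.

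The plan is to start from Ramanujan's identity (Theorem \ref{ram1}), which gives
\[
(q;q)_\infty F_a(q) = 1 - \sum_{m=1}^\infty \sum_{n=0}^\infty (-1)^m q^{m(m+1)/2 + mn}(A_{n+1} - A_n).
\]
Modulo $S_2$, the differences $A_{n+1}-A_n$ cycle through the five values $A_1 - 2$, $A_2 - A_1$, $0$, $A_1 - A_2$, $2 - A_1$ as $n$ runs through residues $0,1,2,3,4$ modulo $5$. Splitting the inner sum into these five arithmetic progressions and summing each resulting geometric-like series in $n$ converts the double sum into a finite combination of Lambert-style series in $m$. These series in turn can be recognized via Jacobi's triple product identity \eqref{jtp} — and, where necessary, the quintuple product identity (Lemma \ref{qpi}) — as ratios of the theta functions $f(-q^5,-q^{20})$, $f(-q^{10},-q^{15})$, $f(-q^{25})$ that appear on the right-hand side of the theorem.

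After dividing through by $(q;q)_\infty$, I expect to obtain an expression with five terms corresponding to the residues $q^0, q^1, q^2, q^3, q^4$; the coefficient of $q^4$ must vanish, which corresponds to Ramanujan's congruence \eqref{p1}, and serves as a consistency check. The remaining four coefficients should be polynomials in $a$ congruent modulo $S_2$ to $1$, $A_1 - 1$, $A_2$, and $-A_1$, respectively. To see that the coefficient of $q$ takes the stated form, I would use the congruence $-A_1^2 \equiv A_1 - 1 \pmod{S_2}$ established above; this is precisely the identity that reconciles the polynomial form of Theorem \ref{5da} with the trigonometric form of Theorem \ref{5dt} (where the coefficient appears as $-4\cos^2(2n\pi/5)$).

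The main obstacle is the identification step: as the paper notes, the simple rationalization technique that handled the $2$- and $3$-dissections does not extend cleanly to the $5$-dissection, because the subdivision into residue classes does not collapse to a single product. The crux of the argument is therefore the application of the quintuple product identity to the Lambert-style sums, which must produce exactly the quotients $\frac{f(-q^{10},-q^{15})f^2(-q^{25})}{f^2(-q^5,-q^{20})}$, $\frac{f^2(-q^{25})}{f(-q^5,-q^{20})}$, $\frac{f^2(-q^{25})}{f(-q^{10},-q^{15})}$, and $\frac{f(-q^5,-q^{20})f^2(-q^{25})}{f^2(-q^{10},-q^{15})}$. Ensuring that all theta-function manipulations respect the congruence modulo $S_2$, and that the higher-degree polynomial expressions in $a$ (arising from products of $A_n$'s) are consistently reduced using the identities of the first paragraph, is the technically delicate portion of the proof.
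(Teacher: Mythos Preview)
Your plan diverges from the paper's route and, more importantly, has a real gap at the point you yourself flag as ``technically delicate.'' The paper does not prove Theorem~\ref{5da} via Theorem~\ref{ram1} and the quintuple product; it explicitly says the proof ``uses the famous Rogers--Ramanujan continued fraction identities'' (and omits the details). Concretely, the standard argument rationalizes $F_a(q)$ modulo $S_2$ to obtain a factor $(q;q)_\infty/(q^{25};q^{25})_\infty$ (equivalently $f(-q)/f(-q^{25})$), and then invokes the Rogers--Ramanujan identity
\[
\frac{(q;q)_\infty}{(q^{25};q^{25})_\infty}=\frac{f(-q^{10},-q^{15})}{f(-q^{5},-q^{20})}-q-q^{2}\,\frac{f(-q^{5},-q^{20})}{f(-q^{10},-q^{15})}
\]
to perform the $5$-dissection of that factor. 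This is the step that produces exactly the four theta quotients appearing in the theorem.

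Your outline never supplies an analogue of this step. You begin with $(q;q)_\infty F_a(q)$ from Theorem~\ref{ram1}, split the inner sum into residue classes modulo $5$, and then write ``after dividing through by $(q;q)_\infty$, I expect to obtain an expression with five terms.'' But $(q;q)_\infty$ is not a series in $q^{5}$, so dividing by it does not respect the $5$-dissection; to carry this out you must know the $5$-dissection of $(q;q)_\infty$ (or of its reciprocal), and that is precisely the Rogers--Ramanujan input the paper cites and you omit. The quintuple product identity (Lemma~\ref{qpi}) is not a substitute here: it does not convert your Lambert-type sums $\sum_m (-1)^m q^{m(m+1)/2+mr}/(1-q^{5m})$ into the required quotients, nor does it furnish the $5$-dissection of $f(-q)$. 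Your reductions of $A_n$ modulo $S_2$ and the observation that the $q^{4}$-component must vanish are correct and useful, but without the Rogers--Ramanujan identity (or an equivalent $5$-dissection of the Euler product) the identification step cannot be completed as written.
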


In his lost notebook \cite[p.~58, 59, 182]{rama}, Ramanujan factored the coefficients of $F_a(q)$ as functions of $a$. In particular, he sought factors of $S_2$ in the coefficients. The proof of Theorem \ref{5da} uses the famous Rogers-Ramanujan continued fraction identities, which we shall omit here.

We now state and prove the following $7$-dissection of $F_a(q)$.

\begin{theorem}\label{7da}
With usual notations defined earlier, we have
\begin{align}
\frac{(q;q)_\i}{(qa;q)_\i(q/a;q)_\i} & \equiv \frac{1}{f(-q^7)}(A^2+(A-1-1)qAB+A_2q^2B^3+(A_3+1)q^3AC\nonumber\\
& \quad -A_1q^4BC-(A_2+1)q^6C^2)~(\textup{mod}~S_3),\nonumber
\end{align} where $A=f(-q^{21}, -q^{28})$, $B=f(-q^{35}, -q^{14})$ and $C=f(-q^{42}, -q^7)$.
\end{theorem}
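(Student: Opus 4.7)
The plan is to adapt the rationalization method used for the 2- and 3-dissections by combining it with the seven-term addition formula of Lemma~\ref{l2.2}. The key observation is that $S_3 = a^{-3}(1+a+a^2+\cdots+a^6) = a^{-3}(1-a^7)/(1-a)$, so modulo $S_3$ one has $a^7 \equiv 1$ and $a$ behaves like a primitive seventh root of unity; in particular $a^{-k}\equiv a^{7-k}$ and $\prod_{k=0}^{6}(1-a^k q^n)\equiv 1 - q^{7n} \pmod{S_3}$.

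The first step is to multiply numerator and denominator of $F_a(q)$ by $\prod_{k=2}^{5}(a^k q;q)_\infty$. By the congruence above, the denominator becomes $\prod_{k=1}^{6}(a^k q;q)_\infty \equiv (q^7;q^7)_\infty/(q;q)_\infty\pmod{S_3}$, while Jacobi's triple product identity \eqref{jtp} in the form
$(a^k q;q)_\infty(a^{-k}q;q)_\infty = f(-a^kq,-a^{-k})/((1-a^{-k})(q;q)_\infty)$ (applied for $k=2,3$) repackages the surviving factors. This produces the equivalent statement
\[
F_a(q)\,(1-a^{-2})(1-a^{-3})\,f(-q^7) \equiv f(-a^2 q,-a^{-2})\,f(-a^3 q,-a^{-3}) \pmod{S_3}.
\]

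The second step is to $7$-dissect each factor $f(-a^kq,-a^{-k})$ via Lemma~\ref{l2.2} with $N=7$, $\alpha=-a^kq$, $\beta=-a^{-k}$. A short calculation yields $U_j=(-1)^j a^{kj}q^{j(j+1)/2}$, and because $a^7\equiv 1$ the ratios $U_{7+j}/U_j = -q^{28+7j}$ and $V_{7-j}/U_j = -q^{21-7j}$ are independent of $a$. The summand for $j=3$ vanishes since $f(-q^{49},-1)=0$ by \eqref{jtp}, while the summands for $j=4,5,6$ are normalized to positive powers of $q$ using the elementary identity \eqref{elt1} (with $n=-1$, $ab=q^{49}$). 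The outcome is that each factor collapses to
\[
f(-a^kq,-a^{-k}) \equiv (1-a^{-k})A + (a^{-2k}-a^k)qB + (a^{2k}-a^{-3k})q^3 C \pmod{S_3},
\]
with $A,B,C$ as in the statement.

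Finally I would multiply the two dissected expressions, obtaining exactly the six monomial products $A^2$, $qAB$, $q^2 B^2$, $q^3 AC$, $q^4 BC$, $q^6 C^2$ (no others can arise), and verify for each that its coefficient is congruent modulo $\Phi_7(a)$ to $(1-a^{-2})(1-a^{-3})\equiv 1-a^5-a^4+a^2$ times the claimed symmetric combination ($1$, $A_1-1$, $A_2$, $A_3+1$, $-A_1$, $-(A_2+1)$ respectively). The main obstacle is this final bookkeeping: six polynomial identities must be checked in the cyclotomic quotient. For instance, the coefficient of $qAB$ works out to $2a-a^2-1$, and one verifies $(1-a^5-a^4+a^2)(a+a^{-1}-1)\equiv 2a-a^2-1\pmod{\Phi_7(a)}$; the other five reductions are analogous. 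Each is elementary in isolation, but their cumulative verification is precisely what the paper \cite{bcb1} alludes to when it speaks of ``more complicated manipulations'' beyond those needed for the 2- and 3-dissections.
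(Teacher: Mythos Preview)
Your proposal is correct and follows essentially the same rationalization strategy as the paper's proof: multiply through so that the denominator becomes $\prod_{k=1}^{6}(a^k q;q)_\infty\equiv (q^7;q^7)_\infty/(q;q)_\infty$, repackage the new numerator factors via Jacobi's triple product, apply Lemma~\ref{l2.2} with $N=7$ to each theta factor, and then multiply out. The only cosmetic difference is that you parametrize with $(\alpha,\beta)=(-a^kq,-a^{-k})$ and pull out $(1-a^{-k})$, whereas the paper uses $(\alpha,\beta)=(-a^k,-q/a^k)$ and pulls out $(1-a^k)$; your explicit three-term dissection and the sample verification for the $qAB$ coefficient are in fact more detailed than what the paper records.
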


\begin{proof}
Rationalizing and using Theorem \ref{jtp} we find
\begin{align}
\frac{(q;q)_\i}{(qa;q)_\i(q/a;q)_\i} & \equiv \frac{(q;q)^2_\i(qa^2;q)_\i(q/a^2;q)_\i(qa^3;q)_\i(q/a^3;q)_\i}{(q^7;q^7)_\i}\label{6.2}\\
& \equiv \frac{1}{f(-q^7)}\frac{f(-a^2,-q/a^2)}{(1-a^2)}\frac{f(-a^3,-q/a^3}{(1-a^3)}~(\textup{mod}~S_3).\nonumber
\end{align}
Now using Lemma \ref{l2.2} with $(\al, \be, N)=(-a^2, -q/a^2, 7)$ and $(-a^3, -q/a^3, 7)$ respectively, we find that
\begin{equation}\label{6.3}
\frac{f(-a^2,-q/a^2)}{(1-a^2)}\equiv A-q\frac{(a^5-a^4)}{(1-a^2)}B+q^3\frac{(a^3-a^6)}{(1-a^2}C~(\textup{mod}~S_3)
\end{equation} and 
\begin{equation}\label{6.4}
\frac{f(-a^3,-q/a^3)}{(1-a^3)}\equiv A-q\frac{(a^4-a^6)}{(1-a^3)}B+q^3\frac{(a-a^2)}{(1-a^3}C~(\textup{mod}~S_3).
\end{equation} Now substituting \eqref{6.3} and \eqref{6.4} in \eqref{6.2} and simplifying, we shall finish the proof of the desired result.
\end{proof}

Although there is a result for the 11 dissection as well, we do not discuss it here. We just state the $11$-disection of $F_a(q)$ below.

\begin{theorem}\label{11da}
With $A_m$ and $S_5$ defined as earlier, we have
\begin{align}
F_a(q) & \equiv \frac{1}{(q^{11};q^{11})_\i(q^{121};q^{121})^2_\i}(ABCD+\{A_1-1\}qA^2BE\nonumber\\
& \quad +A_2q^2AC^2D+\{A_3+1\}q^3ABD^2\nonumber\\
& \quad +\{A_2+A_4+1\}q^4ABCE-\{A_2+A_4\}q^5B^2CE\nonumber\\
& \quad +\{A_1+A_4\}q^7ABDE-\{A_2+A_5+1\}q^{19}CDE^2\nonumber\\
& \quad -\{A_4+1\}q^9ACDE-\{A_3\}q^{10}BCDE)~(\textup{mod}~S_5),\nonumber
\end{align}
where $A=f(-q^{55}, -q^{66})$, $B=f(-q^{77}, -q^{44})$, $C=f(-q^{88}, -q^{33})$, $D=f(-q^{99}, -q^{22}),$ and $E=f(-q^{110}, -q^{11})$.
\end{theorem}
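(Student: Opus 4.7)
The plan is to mimic the rationalisation argument used in the proof of Theorem \ref{7da}, but now with $p=11$ in place of $p=7$. The key algebraic fact powering the rationalisation is that modulo $S_5$ the element $a$ behaves like a primitive $11$th root of unity, so
$$\prod_{k=1}^{5}(1-a^{k}x)(1-x/a^{k}) \;\equiv\; \prod_{k=1}^{10}(1-a^{k}x) \;=\; \frac{1-x^{11}}{1-x} \pmod{S_5}.$$
Applying this with $x=q^{n}$ and taking the product over $n\geq 1$, I would obtain
$$(aq;q)_\infty (q/a;q)_\infty \prod_{k=2}^{5}(a^{k}q;q)_\infty (q/a^{k};q)_\infty \;\equiv\; \frac{(q^{11};q^{11})_\infty}{(q;q)_\infty} \pmod{S_5},$$
which, after using the Jacobi triple product identity \eqref{jtp} in the form $(a^{k}q;q)_\infty (q/a^{k};q)_\infty = f(-a^{k},-q/a^{k})/((1-a^{k})(q;q)_\infty)$, rewrites $F_a(q)$ as a single rational expression whose denominator is $(q^{11};q^{11})_\infty$ and whose numerator is a product of the four theta functions $f(-a^{k},-q/a^{k})$ for $k=2,3,4,5$, times a scalar in the $(1-a^{k})$'s.

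Next I would apply Lemma \ref{l2.2} with $(\al,\be,N)=(-a^{k},-q/a^{k},11)$ to each of the four theta functions $f(-a^{k},-q/a^{k})$, exactly as is done in equations \eqref{6.3} and \eqref{6.4} for the $7$-dissection. Each such application produces an $11$-term dissection whose summands are theta functions at base $q^{121}$, namely combinations of $A,B,C,D,E$ as defined in the statement. Modulo $S_5$, the eleven scalar coefficients $a^{j}$ with $j=0,1,\ldots,10$ satisfy $\sum_{j=0}^{10} a^{j}\equiv 0$, so many of the $11$ summands in each dissection either vanish or coalesce in pairs; I expect only a handful of non-zero summands to survive in each of the four factors, expressible as $\mathbb{Z}$-linear combinations of $1, a^{\pm 1}, a^{\pm 2}, \ldots, a^{\pm 5}$ divided by $(1-a^{k})$.

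The final step is to form the fourfold product of these reduced expansions and to collect like powers of $q$. Each surviving term will be of shape $q^{e}\cdot X_1X_2X_3X_4$ with $X_i\in\{A,B,C,D,E\}$, and its coefficient will be a rational function of $a$ whose numerator is a symmetric polynomial in the ``conjugate pairs'' $a^{k}$ and $a^{-k}$. Using the congruences $a^{11}\equiv 1\pmod{S_5}$ together with the standard identity $(1-a^{i})(1-a^{j})(1-a^{i+j})^{-1}\equiv \pm a^{\ell}\pmod{S_5}$ for suitable $\ell$, each such coefficient should reduce to one of $1,\,A_1-1,\,A_2,\,A_3+1,\,A_2+A_4+1,\,A_2+A_4,\,A_1+A_4,\,A_2+A_5+1,\,A_4+1,\,A_3$, matching the ten coefficients that appear in the statement. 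Finally, the extra $(q^{121};q^{121})_\infty^{2}$ in the stated denominator will emerge by absorbing two of the four $(q^{121};q^{121})_\infty$ factors produced by the Jacobi triple product expansion of the $X_i$'s.

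The main obstacle is not any single estimate but the sheer combinatorial bookkeeping of Step~3: one has to verify that after multiplying out four dissected theta functions, exactly the ten pairings $ABCD, A^{2}BE, AC^{2}D, \ldots, BCDE$ listed in the theorem survive, and that the remaining ``forbidden'' pairings have coefficients congruent to $0\pmod{S_5}$. This mirrors, but is considerably more intricate than, the rationalisation performed for the $7$-dissection in Theorem \ref{7da}, and it is the step where a careful tabular calculation (or computer verification) is essentially unavoidable.
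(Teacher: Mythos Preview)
The paper itself omits the proof of this theorem, saying only that it is ``not difficult'' but ``very tedious'' and uses ``various routine $q$-series identities and results mentioned in Section~1''. Your outline correctly reproduces the rationalisation step and the appeal to Lemma~\ref{l2.2}, which are indeed the first moves.

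There is, however, a genuine gap. After rationalising you arrive at
\[
F_a(q)\;\equiv\;\frac{(q;q)_\infty^{2}\prod_{k=2}^{5}(a^{k}q;q)_\infty(q/a^{k};q)_\infty}{(q^{11};q^{11})_\infty}\pmod{S_5},
\]
and rewriting each pair via the Jacobi triple product leaves you with a product of four theta functions divided by $(q^{11};q^{11})_\infty\,(q;q)_\infty^{2}$. The extra factor in the denominator is $(q;q)_\infty^{2}$ at base~$q$, \emph{not} $(q^{121};q^{121})_\infty^{2}$. Your proposed mechanism for producing the latter---``absorbing two of the four $(q^{121};q^{121})_\infty$ factors produced by the Jacobi triple product expansion of the $X_i$'s''---does not work: those factors sit in the numerator, and no rearrangement of monomials in the fourfold product will convert a base-$q$ Euler product in the denominator into a base-$q^{121}$ one. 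This is exactly where the $11$-dissection parts company with the $7$-dissection: for $p=7$ the two $(q;q)_\infty$ factors cancel perfectly against the two Jacobi denominators, but for $p=11$ there are four Jacobi denominators and only two $(q;q)_\infty$'s, leaving the surplus $(q;q)_\infty^{-2}$.

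This surplus is precisely why the paper records Winquist's identity (Lemma~\ref{wi}) among the preliminaries. In the Berndt--Chan--Chan--Liaw argument the ten-factor product $(q;q)_\infty^{2}\prod_{k=2}^{5}(a^{k}q;q)_\infty(q/a^{k};q)_\infty$ is, modulo $S_5$, matched with the left-hand side of Winquist's identity; that identity then replaces the whole product by a combination of theta functions of the right shape, after which a further $11$-dissection via Lemma~\ref{l2.2} produces the stated form with the correct denominator $(q^{11};q^{11})_\infty(q^{121};q^{121})_\infty^{2}$. Your plan can be salvaged by inserting this step, but as written the final paragraph misidentifies the origin of $(q^{121};q^{121})_\infty^{2}$ and would not close.
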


Although the proof of Theorem \ref{11da} is not difficult, it is very tedious involving various routine $q$-series identities and results mentioned in Section 1, so we shall omit it here.

\subsection{Other results from the Lost Notebook}

Apart from the results that we have discussed so far in this section, Ramanujan also recorded many more entires is his lost notebook which pertain to cranks. For example, on page 58 in his lost notebook \cite{rama}, Ramanujan has written out the first $21$ coefficients in the power series representation of the crank $F_a(q)$, where he had incorrectly written the coefficient of $q^{21}$. On the following page, beginning with the coefficient of $q^{13}$, Ramanujan listed some (but not necessarily all) of the factors of the coefficients up to $q^{26}$. He did not indicate why he recorded an incomplete list of such factors. However, it can be noted that in each case he recorded linear factors only when the leading index is $\leq 5$.

On pages 179 and 180 in his lost notebook \cite{rama}, Ramanujan offered ten tables of indices of coefficients $\la_n$ satisfying certain congruences. On page 61 in \cite{rama}, he offers rougher drafts of nine of these ten tables, where Table 6 is missing. Unlike the tables on pages 179 and 180, no explanations are given on page 61. It is clear that Ramanujan had calculated factors beyond those he has recorded on pages 58 and 59 of his lost notebook as mentioned in the earlier paragraph. In \cite{bcb2}, the authors have verified these claims using a computer algebra software. Among other results, O.--Y. Chan \cite{oyc} has verified all the tables of Ramanujan. We explain below these tables following Berndt, Chan, Chan and Liaw \cite{bcb2}.

\begin{center}
\textbf{Table 1.} $\la_n\equiv 0~(\textup{mod}~a^2+1/a^2)$
\end{center}

Here Ramanujan indicates which coefficients of $\la_n$ have $a_2$ as a factor. If we replace $q$ by $q^2$ in \eqref{2d}, we see that Table 1 contains the degree of $q$ for those terms with zero coefficients for both $\dfrac{f(-q^6, -q^{10})}{(-q^4;q^4)_\i}$ and $q\dfrac{f(-q^2, -q^{14})}{(-q^4;q^4)_\i}$. There are $47$ such values.

\begin{center}
\textbf{Table 2.} $\la_n\equiv 1~(\textup{mod}~a^2+1/a^2)$
\end{center}

Returning again to \eqref{2d} and replacing $q$ by $q^2$, we see that Ramanujan recorded all the indices of the coefficients that are equal to $1$ in the power series expansion of $\dfrac{f(-q^6, -q^{10})}{(-q^4;q^4)_\i}$. There are $27$ such values.

\begin{center}
\textbf{Table 3.} $\la_n\equiv -1~(\textup{mod}~a^2+1/a^2)$
\end{center}

This table can be understood in a similar way as the previous table. There are $27$ such values.

\begin{center}
\textbf{Table 4.} $\la_n\equiv a-1+\frac{1}{a}~(\textup{mod}~a^2+1/a^2)$
\end{center}

Again looking at \eqref{2d}, we note that $a-1+\frac{1}{a}$ occurs as a factor of the second expression on the right side. Thus replacing $q$ by $q^2$, Ramanujan records the indices of all coefficients of $q\dfrac{f(-q^2, -q^{14})}{(-q^4;q^4)_\i}$ which are equal to $1$. There are $22$ such values.

\begin{center}
\textbf{Table 5.} $\la_n\equiv -\displaystyle\left(a-1+\frac{1}{a}\displaystyle\right)~(\textup{mod}~a^2+1/a^2)$
\end{center}

This table can also be interpreted in a manner similar to the previous one. There are $23$ such values.

\begin{center}
\textbf{Table 6.} $\la_n\equiv 0~(\textup{mod}~a+1/a)$
\end{center}

Ramanujan here gives those coefficients which have $a_1$ as a factor. There are only three such values and these values can be discerned from the table on page 59 of the lost notebook.

From the calculation $$\frac{(q;q)_\i}{(aq;q)_\i)(q/a;q)_\i}\equiv \frac{(q;q)_\i}{(-q^2;q^2)_\i} = \frac{f(-q)f(-q^2)}{f(-q^4)}~(\textup{mod}~a+1/a),$$ we see that in this table Ramanujan has recorded the degree of $q$ for the terms with zero coefficients in the power series expansion of $\dfrac{f(-q)f(-q^2)}{f(-q^4)}$. 

From the next three tables, it is clear from the calculation $$\frac{(q;q)_\i}{(aq;q)_\i)(q/a;q)_\i}\equiv \frac{(q^2;q^2)_\i}{(-q^3;q^3)_\i} = \frac{f(-q^2)f(-q^3)}{f(-q^6)}~(\textup{mod}~a+1/a),$$ that Ramanujan recorded the degree of $q$ for the terms with coefficients $0$, $1$ and $-1$ respectively in the power series expansion of $\dfrac{f(-q^2)f(-q^3)}{f(-q^6)}$.

\begin{center}
\textbf{Table 7.} $\la_n\equiv 0~(\textup{mod}~a-1+1/a)$
\end{center}

There are $19$ such values.

\begin{center}
\textbf{Table 8.} $\la_n\equiv 1~(\textup{mod}~a-1+1/a)$
\end{center}

There are $26$ such values.

\begin{center}
\textbf{Table 9.} $\la_n\equiv -1~(\textup{mod}~a-1+1/a)$
\end{center}

There are $26$ such values.

\begin{center}
\textbf{Table 10.} $\la_n\equiv 0~(\textup{mod}~a+1+1/a)$
\end{center}

Ramanujan put $2$ such values. From the calculation $$\frac{(q;q)_\i}{(aq;q)_\i)(q/a;q)_\i}\equiv \frac{(q;q)^2_\i}{(-q^3;q^3)_\i} = \frac{f^2(-q)}{f(-q^3)}~(\textup{mod}~a+1+1/a),$$ it is clear that Ramanujan had recorded the degree of $q$ for the terms with zero coefficients in the power series expansion of $\dfrac{f^2(-q)}{f(-q^3)}$.

The infinite products  $\dfrac{f(-q^6, -q^{10})}{(-q^4;q^4)_\i}$, $q\dfrac{f(-q^2, -q^{14})}{(-q^4;q^4)_\i}$, $\dfrac{f(-q)f(-q^2)}{f(-q^4)}$, $\dfrac{f(-q^2)f(-q^3)}{f(-q^6)}$ and $\dfrac{f^2(-q)}{f(-q^3)}$ do not appear to have monotonic coefficients for sufficiently large $n$. However, if these products are dissected, then we have the following conjectures by Berndt, Chan, Chan and Liaw \cite{bcb2}.

\begin{conjecture}\label{cj1}
Each component in each of the dissections for the five products given above has monotonic coefficients for powers of $q$ above $600$.
\end{conjecture}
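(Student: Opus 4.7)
The plan is to reduce each component's monotonicity question to two independent tasks: a sharp asymptotic estimate for large $n$ and a finite computer verification for the initial range $n\le 600$.

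First, I would write out the prescribed dissection of each of the five products explicitly. Using Lemma \ref{l2.2} together with the Jacobi triple product \eqref{jtp}, every dissected component separates as $q^{r}$ times a product or quotient of theta functions $f(-q^a,-q^b)$ and eta-type factors $(q^m;q^m)_\i^{\pm 1}$. After extracting a leading sign, one verifies (from the product representation, or combinatorially) that the resulting coefficient sequences are non-negative; this is consistent with Ramanujan's empirical tables and is the essential prerequisite for eventual monotonicity.

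Next, to each component I would apply the Hardy--Ramanujan--Rademacher circle method, or more efficiently the asymptotic theorem of Meinardus for infinite products of the form $\prod_{n\ge 1}(1-q^n)^{-b_n}$. This yields an asymptotic of the shape $c_n\sim Cn^{-\al}\exp(\be\sqrt{n})$ with explicit positive constants $C,\al,\be$ depending only on the Dirichlet series $\sum b_n n^{-s}$ attached to the component. From such an asymptotic one extracts
\begin{equation}
\frac{c_{n+1}}{c_n}=1+\frac{\be}{2\sqrt{n}}+O\!\left(\frac{1}{n}\right),\nonumber
\end{equation}
which strictly exceeds $1$ once $n$ is past an effectively computable threshold $N_0$.

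The chief obstacle is the quantitative one: one must make the $O(1/n)$ error above explicit enough to certify $c_{n+1}>c_n$ not merely for ``$n$ sufficiently large'' but precisely for every $n\ge 600$. I would follow Odlyzko's sharp refinement of the Meinardus asymptotic, or carry out a direct saddle-point analysis while carefully tracking the contribution of the minor arcs in the circle method, in order to force $N_0\le 600$ uniformly across all finitely many components arising from the five products. The residual range $n\le 600$ is then settled by expanding each dissected component as a truncated power series in any standard computer algebra system and comparing adjacent coefficients directly; combined with the asymptotic verification this closes the argument.
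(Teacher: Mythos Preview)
The paper does not actually prove this statement: it is recorded there as a \emph{conjecture} of Berndt, Chan, Chan and Liaw, with the remark that O.--Y.~Chan \cite{oyc} has since verified it using the Hardy--Ramanujan circle method. So there is no proof in the paper to compare against; the only datum available is that the eventual method of proof is the circle method, and in that sense your overall strategy points in the right direction.

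That said, as a proof sketch your proposal has real gaps. First, Meinardus' theorem in its standard form applies to products $\prod_{n\ge 1}(1-q^n)^{-b_n}$ with $b_n\ge 0$; the five products here are genuine eta-\emph{quotients} (for instance $f(-q)f(-q^2)/f(-q^4)$ has $(q;q)_\i(q^2;q^2)_\i$ in the numerator), so the exponents $b_n$ take both signs and Meinardus does not apply off the shelf. You would have to run the circle method directly, and for such quotients the minor-arc contribution is no longer automatically dominated; this is exactly where the work lies in \cite{oyc}. Second, your assertion that the dissected components have non-negative coefficients ``from the product representation, or combinatorially'' is not justified and in fact is not obvious for series such as $f(-q)f(-q^2)/f(-q^4)$, whose coefficients change sign repeatedly; monotonicity here does not reduce to a positivity statement. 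Third, the crux of the conjecture is the explicit threshold $600$, and pushing the effective $N_0$ down that far requires a careful quantitative analysis with explicit constants, not merely the qualitative $c_{n+1}/c_n=1+\be/(2\sqrt n)+O(1/n)$ you quote; your sketch acknowledges this but does not carry it out.
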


The authors have checked this conjecture for $n=2000$.

\begin{conjecture}\label{cj2}
For any positive integers $\al$ and $\be$, each component of the $(\al+\be+1)$-dissection of the product $$\frac{f(-q^\al)f(-q^\be)}{f(-q^{al+\be+1})}$$ has monotonic coefficients for sufficiently large powers of $q$.
\end{conjecture}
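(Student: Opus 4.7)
The plan is to reduce the conjecture to the asymptotic analysis of coefficients of eta-quotients, at which point monotonicity within each residue class modulo $N := \alpha+\beta+1$ will follow from the Hardy--Ramanujan--Rademacher expansion. First, by the Jacobi triple product identity \eqref{jtp},
$$\frac{f(-q^\alpha)f(-q^\beta)}{f(-q^N)} = \frac{(q^\alpha;q^\alpha)_\infty (q^\beta;q^\beta)_\infty}{(q^N;q^N)_\infty} =: \sum_{n=0}^\infty c_n q^n,$$
which, after multiplication by a suitable fractional power of $q$, is an eta-quotient of weight $\tfrac12$ on a congruence subgroup of level dividing $N\cdot \mathrm{lcm}(\alpha,\beta)$. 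The conjecture asks that for each $r \in \{0,1,\dots,N-1\}$, the subsequence $\{c_n : n \equiv r \pmod N\}$ be eventually monotone.

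Second, I would apply the Rademacher circle method to produce an asymptotic expansion of the form
$$c_n = \sum_{\mathfrak c} A_{\mathfrak c}(n)\, n^{-\sigma_{\mathfrak c}}\, e^{\kappa_{\mathfrak c}\sqrt n} + O(n^{-K}),$$
indexed by the cusps $\mathfrak c$ at which the eta-quotient has a pole, where each $\kappa_{\mathfrak c} > 0$ is determined by the principal-part order at $\mathfrak c$ and $A_{\mathfrak c}(n)$ is a Kloosterman-type sum that is periodic in $n$ modulo the cusp width. Restricting to $n \equiv r \pmod N$ collapses this into a finite real-linear combination of the positive exponentials $e^{\kappa_{\mathfrak c}\sqrt n}$; provided the coefficient of the dominant exponential is nonzero, the ratio $c_{n+N}/c_n$ tends to $\exp\bigl(\kappa_{\mathrm{dom}}\, N/(2\sqrt n)\bigr) > 1$, so eventual monotonicity within the residue class follows once the dominant term strictly dominates the error.

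The hard part will be verifying non-vanishing of the leading coefficient $A_{\mathrm{dom}}(r)$ for every residue $r$. Kloosterman-type cancellations can in principle annihilate the leading term on a particular residue class, forcing a descent to the next cusp (which could itself vanish), so a structural argument about the multiplier of the eta-quotient -- perhaps via the explicit formulas of Bringmann--Ono for coefficients of weakly holomorphic modular forms -- will likely be required to rule this out uniformly in $(\alpha,\beta)$. Moreover, in order to recover the empirical threshold of $q^{600}$ observed in Conjecture \ref{cj1}, one would have to track the implied constants in Rademacher's formula explicitly, via modified Bessel-function asymptotics together with Weil-type bounds on Kloosterman sums. The $5$-, $7$-, and $11$-dissections established in Theorems \ref{5da}--\ref{11da} supply concrete test cases: each component is already exhibited as an eta-quotient whose Rademacher expansion admits a closed-form computation, and these would serve both to calibrate the general argument and to certify the monotonicity prediction in the cases that are already accessible by elementary means.
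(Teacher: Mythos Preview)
The paper does not contain a proof of this statement: it is presented there as a \emph{conjecture} of Berndt, Chan, Chan and Liaw, with only a one-line remark that ``using the Hardy--Ramanujan circle method, these conjectures have been verified by O.-Y.~Chan.'' So there is no argument in the paper to compare your proposal against; at most one can say that your intended strategy --- Rademacher-type asymptotics for the coefficients of the eta-quotient $(q^\alpha;q^\alpha)_\infty(q^\beta;q^\beta)_\infty/(q^N;q^N)_\infty$ --- is precisely the route attributed to Chan.

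That said, what you have written is a plan, not a proof, and you have correctly located the genuine obstruction yourself: the non-vanishing of the dominant Kloosterman-weighted coefficient $A_{\mathrm{dom}}(r)$ on each residue class $r \pmod N$. Nothing in your outline actually discharges this; ``a structural argument about the multiplier of the eta-quotient \dots will likely be required'' is a statement of hope, not an argument. For specific small $(\alpha,\beta)$ one can check this by hand, but uniformly in $(\alpha,\beta)$ it is exactly the content of the conjecture. Until that step is supplied, the proposal does not establish the claim.

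One minor correction: your final paragraph invokes Theorems \ref{5da}--\ref{11da} as ``concrete test cases'' for this conjecture, but those theorems concern dissections of the two-variable crank generating function $F_a(q)=(q;q)_\infty/((aq;q)_\infty(q/a;q)_\infty)$, not of the one-variable products $f(-q^\alpha)f(-q^\beta)/f(-q^{\alpha+\beta+1})$ appearing here. The relevant test cases for Conjecture~\ref{cj2} are the products arising in Tables 6--10, namely $(\alpha,\beta)=(1,2),(2,3),(1,1)$, as the paper itself notes; the $5$-, $7$-, $11$-dissections of $F_a(q)$ are a different family and their components are not of the form covered by this conjecture. Likewise, the threshold $q^{600}$ belongs to Conjecture~\ref{cj1} (the five specific products), not to the general Conjecture~\ref{cj2}, which only asserts monotonicity for \emph{sufficiently large} powers of $q$ with no uniform bound claimed.
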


It is clear that Conjecture \ref{cj1} is a special case of Conjecture \ref{cj2} for the last three infinite products given above when we set $(\al, \be)=(1,2), (2,3)$, and $(1,1)$ respectively. Using the Hardy-Ramanujan circle method, these conjectures have been verified by O. -Y. Chan \cite{oyc}.

On page 182 in his lost notebook \cite{rama}, Ramanujan returns to the coefficients $\la_n$ in \eqref{gfcc}. He factors $\la_n$ for $1\leq n\leq n$ as before, but singles out nine particular factors by giving them special notation. These are the factors which occured more than once. Ramanujan uses these factors to compute $p(n)$ which is a special case of \eqref{gfcc} with $a=1$. It is possible that through this, Ramanujan may have been searching for results through which he would have been able to give some divisibility criterion of $p(n)$. Ramanujan has left no results related to these factors, and it is up to speculation as to his motives for doing this.

Again on page 59, Ramanujan lists two factors, one of which is Theorem \ref{ram1}. further below this he records two series, namely,

\begin{equation}\label{8.6}
S_1(a,q):=\frac{1}{1+a}+\sum_{n=1}^\i\displaystyle\left(\frac{(-1)^nq^{n(n+1)/2}}{1+aq^n}+\frac{(-1)^nq^{n(n+1)/2}}{a+q^n}\displaystyle\right)
\end{equation}

\noindent and

\begin{equation}\label{8.7}
S_2(a,q):=1+\sum_{m=1, n=0}^{\i}(-1)^{m+n}q^{m(m+1)/2+nm}(a_{n+1}+a_n),
\end{equation}

\noindent where here $a_0:=1$. Although no result has been written by Ramanujan, however the authors in \cite{bcb2} have found the following theorem.

\begin{theorem}
With the notations as described above we have $$(1+a)S_1(a,q)=S_2(a,q)=F_{-a}(q).$$
\end{theorem}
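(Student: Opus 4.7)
The plan is to establish the two equalities $(1+a)S_1(a,q) = F_{-a}(q)$ and $S_2(a,q) = F_{-a}(q)$ separately, each by pivoting off one of the identities already recorded in Section 1. Although the theorem bundles them together, they have essentially independent proofs that share only the common target $F_{-a}(q)$.

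For the second equality, the natural tool is Ramanujan's Theorem \ref{ram1}, applied with $a$ replaced by $-a$. The key observation is the parity identity $A_n(-a) = (-a)^n + (-a)^{-n} = (-1)^n A_n(a)$, from which
\begin{equation*}
A_{n+1}(-a) - A_n(-a) = -(-1)^n\bigl(A_{n+1}(a) + A_n(a)\bigr).
\end{equation*}
Substituting into Theorem \ref{ram1} converts the alternating sign pattern $(-1)^m$ into $(-1)^{m+n}$ and turns the difference $A_{n+1} - A_n$ into a sum $A_{n+1} + A_n$. After clearing $(q;q)_\infty$ and handling the boundary convention $a_0 := 1$, one reads off precisely the expression $S_2(a,q)$ from \eqref{8.7}.

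For the first equality, I would apply the K\u{a}c--Wakimoto identity (Theorem \ref{kac}) with $x = -a$, which gives
\begin{equation*}
(q;q)_\infty F_{-a}(q) \;=\; (1+a)\sum_{k=-\infty}^{\infty}\frac{(-1)^k q^{k(k+1)/2}}{1 + a q^k}.
\end{equation*}
Next, split the bilateral sum into $k=0$, $k\ge 1$, and $k\le -1$. The $k=0$ contribution is $\frac{1}{1+a}$, matching the leading term of $S_1(a,q)$. The $k \ge 1$ range reproduces the first family $\sum_{n\ge 1}(-1)^n q^{n(n+1)/2}/(1+aq^n)$ verbatim. For the $k \le -1$ range, set $k = -n$ and use the algebraic rearrangement
\begin{equation*}
\frac{q^{n(n-1)/2}}{1 + a q^{-n}} \;=\; \frac{q^{n(n-1)/2}\cdot q^n}{q^n + a} \;=\; \frac{q^{n(n+1)/2}}{a + q^n},
\end{equation*}
which matches the second family in \eqref{8.6}. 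Assembling the three pieces identifies the K\u{a}c--Wakimoto sum with $(1+a)S_1(a,q)$, completing this half (up to the implicit $(q;q)_\infty$ normalisation which must be reconciled against the definition of $F_{-a}(q)$).

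The technical heart of each direction is index-manipulation: in one case exploiting $A_n(-a) = (-1)^n A_n(a)$, in the other exploiting the reflection $k\mapsto -k$ together with the rewrite $q^{-n}/(1+aq^{-n}) = 1/(q^n+a)$. I expect the main obstacle to be purely bookkeeping, namely tracking signs consistently under $a \mapsto -a$ and treating the $n=0$ / $k=0$ boundary term correctly with the stated convention $a_0 := 1$, so that the leading constants in $S_1$ and $S_2$ match the isolated $k=0$ term of the K\u{a}c--Wakimoto sum and the $``1"$ in Theorem \ref{ram1} respectively without double-counting.
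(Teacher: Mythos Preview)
Your proposal is correct and follows essentially the same route as the paper: the paper also proves the $S_1$-identity by collapsing the two unilateral sums in \eqref{8.6} into the single bilateral sum $\sum_{k\in\mathbb{Z}}(-1)^kq^{k(k+1)/2}(1+a)/(1+aq^k)$ via the same rewrite $q^{n(n+1)/2}/(a+q^n)=q^{n(n-1)/2}/(1+aq^{-n})$ and then invokes Theorem~\ref{kac} with $x=-a$, and proves the $S_2$-identity by applying Theorem~\ref{ram1} with $a\mapsto -a$ and the parity relation $A_n(-a)=(-1)^nA_n(a)$. You were right to flag the $(q;q)_\infty$ normalisation: the paper's own computation in fact terminates at $\dfrac{(q;q)_\infty^2}{(-aq;q)_\infty(-q/a;q)_\infty}=(q;q)_\infty\,F_{-a}(q)$ in both \eqref{9.8} and \eqref{9.9}, so the extra factor is present there as well.
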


\begin{proof}
We multiply \eqref{8.6} by $(1+a)$ to get
\begin{align}
(1+a)S_1(a,q) &= 1+(1+a)\sum_{n=1}^\i\displaystyle\left(\frac{(-1)^nq^{n(n+1)/2}}{1+aq^n}+\frac{(-1)^nq^{n(n+1)/2}}{a+q^n}\displaystyle\right)\nonumber\\
&= 1+(1+a)\sum_{n=1}^\i\displaystyle\left(\frac{(-1)^nq^{n(n+1)/2}}{1+aq^n}+\frac{(-1)^nq^{n(n-1)/2}}{1+aq^{-n}}\displaystyle\right)\nonumber\\
&= 1+(1+a)\sum_{n\neq 0}\frac{(-1)^nq^{n(n+1)/2}}{1+aq^n}\nonumber\\
&= \sum_{n=-\i}^\i\frac{(-1)^nq^{n(n+1)/2}(1+a)}{1+aq^n}\nonumber\\
&= \frac{(q;q)^2_\i}{(-aq;q)_\i(-q/a;q)_\i},\label{9.8}
\end{align} by Theorem \ref{kac}.

Secondly by Theorem \ref{ram1}, we have
\begin{align}
S_2(a,q) &= 1+\sum_{m=1, n=0}^\i(-1)^mq^{m(m+1)/2+nm}(-(-a)^{n+1}-(-a)^{-n-1}+(-a)^n+(-a)^{-n})\nonumber\\
&= \frac{(q;q)^2_\i}{(-aq;q)_\i(-q/a;q)_\i}.\label{9.9}
\end{align} Thus, \eqref{9.8} and \eqref{9.9} completes the proof.

\end{proof}

From the preceeding discussion, it is clear that Ramanujan was very interested in finding some general results with the possible intention of determining arithmetical properties of $p(n)$ from them by setting $a=1$. Although he found many beautiful results, but his goal eluded him. The kind of general theorems on the divisibility of $\la_n$ by sums of powers of $a$ appear to be very difficult. Also, a challenging problem is to show that Ramanujan's Table 6 is complete.

\subsection{Cranks -- The final problem}

In his last letter to G. H. Hardy, Ramanujan announced a new class of functions which he called mock theta functions, and gave several examples and theorems related to them. The latter was dated 20th January, 1920, a little more than three months before his death. Thus for a long time, it was widely believed that the last problem on which Ramanujan worked on was mock theta functions. But the wide range of topics that are covered in his lost notebook \cite{rama} suggests that he had worked on several problems in his death bed. Of course, it is only speculation and some educated guess that we can make. But, the work of Berndt, Chan, Chan and Liaw \cite{bcb3} has provided evidence that the last problem on which Ramanujan worked on was cranks, although he would not have used this terminology.

We have already seen, the various dissections of the crank generating function that Ramanujan had provided. In the preceeding section, we saw various other results of Ramanujan related to the coefficients $\la_n$. In order to calculate those tables, Ramanujan had to calculate with hand various series upto hundreds of coefficients. Even for Ramanujan, this is a tremendous task and we can only wonder what might have led him to such a task. Clearly, there was very little chance that he might have found some nice congruences for these values like \eqref{p1} for example.

In the foregoing discussion of the crank, pages 20, 59--59, 61, 53--64, 70--71, and 179--181 are cited from \cite{rama}. Ramanujan's $5$-dissection for the crank is given on page 20. The remaining ten pages are devoted to the crank.  In fact in pages 58--89, there is some scratch work from which it is very difficult to see where Ramanujan was aiming them at. But it is likely that all these pages were related to the crank. In \cite{bcb3}, the authors have remarked that pages 65 (same as page 73), 66, 72, 77, 80--81, and 83--85 are almost surely related to the crank, while they were unable to determine conclusively if the remaining pages pertain to cranks.

In 1983, Ramanujan's widow Janaki told Berndt that there were more pages of Ramanujan's work than the 138 pages of the lost notebook. She claimed that during her husband's funeral service, some gentlemen came and took away some of her husband's papers. The remaining papers of Ramanujan were donated to the University of Madras. It is possible that Ramanujan had two stacks of paper, one for scratch work or work which he did not think complete, and the other where he put down the results in a more complete form. The pages that we have analysed most certainly belonged to the first stack and it was Ramanujan's intention to return to them later. In the time before his death, it is certainly clear that most of Ramanujan's mathematical thoughts had been only on one topic - cranks.

However, one thing is clear that Ramanujan probably didn't think of the crank or rank as we think of it now. We do not know with certainty whether or not Ramanujan thought combinatorially about the crank. Since his notebooks contain very little words and also since he was in his death bed so he didn't waste his time on definitions and observations which might have been obvious to him; so we are not certain about the extent to which Ramanujan thought about these objects. It is clear from some of his published papers that Ramanujan was an excellent combinatorial thinker and it would not be surprising if he had many combinatorial insights about the crank. But, since there is not much recorded history from this period of his life, we can at best only speculate.

\section{{Concluding Remarks}}

Although we have focused here on only one aspect of Ramanujan's work related to crank, this is by no means the complete picture. Recent work by many distinguished mathematicians have shed light on many different aspects of Ramanujan's work related to cranks. We plan to address those issues in a subsequent article. The interested reader wanting to know more about Ramanujan's mathematics can look at \cite{spirit}. For more information on cranks and its story, a good place is \cite{dyson2}. For some more work of Garvan related to cranks, the interested reader can look into \cite{vcg} and \cite{garvan}. There are some more results in the Lost Notebook related to cranks, Andrews and Berndt has provided an excellent exposition of those in Chapters 2, 3 and 4 of \cite{lnb}.

\section*{{Acknowledgements}}

This note is a part of the Masters Thesis of the author \cite{mps2} submitted to Tezpur University in 2014. The author expresses his gratitude to Prof. Nayandeep Deka Baruah for supervising the thesis and for encouraging the author in various stages of his education. The author also acknowledges the helpful remarks received from Prof. Bruce C. Berndt, Dr. Atul Dixit and Mr. Zakir Ahmed related to Ramanujan's mathematics. The author is also thankful to Dr. Shawn Cooper, the anonymous referee and the editor for their helpful remarks.

\end{document}